\renewcommand{\textbf}[1]{\begingroup\bfseries\mathversion{bold}#1\endgroup}
\newcommand{\N}{\mathbb{N}}
\newcommand{\Z}{\mathbb{Z}}
\newcommand{\R}{\mathbb{R}}
\newcommand{\sph}{\mathbb{S}}
\newcommand{\eps}{\varepsilon}
\newcommand{\curl}{\mathrm{curl}}
\DeclareMathOperator{\dist}{dist}
\DeclareMathOperator{\argmin}{argmin}
\numberwithin{equation}{section}
\declaretheorem[name=Theorem,within=section]{thm}
\declaretheorem[name=Lemma,numberlike=thm]{lemma}
\declaretheorem[name=Proposition,numberlike=thm]{proposition}
\declaretheorem[name=Definition,numberlike=thm,style=definition]{dfn}
\declaretheorem[name=Question,numberlike=thm,style=remark]{question}
\declaretheorem[name=Remark,numberlike=thm,style=remark]{remark}
\allowdisplaybreaks\title{Clearing-out of dipoles  for minimisers of 2-dimensional discrete energies with topological singularities}
\date\today
\begin{document}

\author{Adriana Garroni\thanks{Sapienza Universit\`{a} di Roma. P.le A. Moro 5 00185 Roma, Italy. Email: \texttt{garroni@mat.uniroma1.it}}\quad Mircea Petrache\thanks{Pontificia Cat\'olica Universidad de Chile, Av. V.Mackenna 4860, Macul, Santiago, 6904441, Chile. Email: \texttt{mpetrache@uc.cl}}\quad Emanuele Spadaro\thanks{Sapienza Universit\`{a} di Roma. P.le A. Moro 5 00185 Roma, Italy. Email: \texttt{spadaro@mat.uniroma1.it}}}

\maketitle
\begin{abstract}
    A key question in the analysis of discrete models for material defects, such as vortices in spin systems and superconductors or isolated dislocations in metals, is whether information on boundary energy for a domain can be sufficient for controlling the number of defects in the interior. We present a general combinatorial dipole-removal argument for a large class of discrete models including XY systems and screw dislocation models, allowing to prove sharp conditions under which controlled flux and boundary energy guarantee to have minimizers with zero or one charges in the interior. The argument uses the max-flow min-cut theorem in combination with an ad-hoc duality for planar graphs, and is robust with respect to changes of the function defining the interaction energies. 
\end{abstract}
\noindent
\textbf{AMS Classification:} 58K45, 70G75, 90C27\\
\textbf{Keywords:} material defects, vortices, dislocations, maxflow mincut, integer fluxes
\tableofcontents
\medskip

\section{Introduction}

In this paper we use a general approach based on graph theory and combinatorial arguments in order to address a key question in the analysis of discrete models for material defects such as vortices in spin systems and superconductors or isolated dislocations in metals.

We consider the minimization of a class of discrete energies in dimension $2$ (which include discrete energies for screw dislocations and $XY$ models for spin systems) with a boundary condition. We determine topological conditions on the boundary datum that guarantee that there exist solutions that either do not exhibit singularities, or  have exactly one singularity in the domain.

Our motivating application is the variational problem for the characterisation of the core energy associated to a vortex configuration for the $XY$ model (or equivalently of a screw dislocation for a discrete model for anti-plane plasticity). For concreteness, we consider the following example. We take $\Omega=(-1,1)\times (-1,1)$ and $\eps=\frac1n$. We aim to show that the following problem
\begin{equation}\label{eq-intro-spin}
\min\left\{
\sum_{i,j\in\eps\Z^2\cap\Omega, |i-j|=\eps} |v(i)-v(j)|^2\, :\ v(i) = \frac{i}{|i|} \ \forall i\in \partial\Omega\cap \eps\Z^2\right\},
\end{equation}
minimized over all functions $v:\eps\Z^2\cap\Omega\to \sph^1$, \emph{has minimisers with only one vortex inside $\Omega$}.

\begin{figure}[h]
\centerline{{\def\svgwidth{100pt}
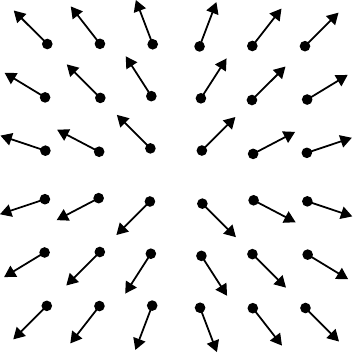}}
\caption{Schematic representation of a vortex for spins}
\label{vortex}
\end{figure} 

The energy in \eqref{eq-intro-spin} is known as the $XY$ model for spin systems, and the corresponding energy is denoted with ${\mathcal {XY}}_\eps$. 
While minimisers for the $\mathcal{ XY}_\eps$ energy (as well as for the corresponding energy $\mathcal{SD}_\eps$ for screw dislocations) are known to converge as $\eps\to 0$ to the function $u_0$ with one singularity at zero (see \cite{AliCic} and \cite{AlDLGarPo}),
the structure of the solution at $\eps>0$ was not known. In principle this asymptotic result does not exclude that the minimisers for $\eps>0$ exhibit short dipoles which disappear in the limit.
In this paper we show that this is not the case. Under appropriate conditions on the domain $\Omega$ and on the boundary datum, we prove that for $\eps>0$ there are minimisers without any dipoles. This information is used in a crucial way, for instance, in \cite{BachCicGarOrl} in order to relate the core energy of models for edge dislocations and partial dislocations with the one obtained with the XY model.


\medskip

The continuum counterpart of the above mentioned $\mathcal{XY}_\eps$ and $\mathcal{SD}_\eps$ models are the Ginzburg Landau energies for superconductivity
\begin{equation}\label{eq-GL}
F_\eps^{GL}(v)=\int_\Omega |\nabla v|^2dx+\frac1{\eps^2} \int_\Omega (|v|^2-1)^2dx,
\quad v:\Omega\subset\R^2\to \R^2, 
\end{equation}
and, in the context of antiplane elasticity, the following energy for screw dislocations 
\begin{equation}\label{eq-screw}
F_\eps^{screw}(\beta,\mu)=\int_{\Omega\setminus \cup_i B_\eps(x_i)}|\beta|^2 + C|\mu|,
\end{equation}
where $\beta:\Omega\to \R^2$ is the elastic strain and satisfies $\curl\beta=\sum_i d_i\delta_{x_i}:=\mu$. The measure $\mu$ represents the distribution of dislocations which are seen as topological singularities of the strain $\beta$. 

\medskip

These models are very much related. It has been proved that, at the leading orders,  they are equivalent in terms of $\Gamma$-convergence.
Interestingly, this equivalence also extends to the discrete models that are at the origin of these two semi-discrete models, where the parameter $\eps$ simulate the lattice spacing, as shown in \cite{AlCicPo} and \cite{AlDLGarPo} (in Section 2 these discrete models will be discussed in detail). 

The question of whether the minimisers of the energies in \eqref{eq-GL} (and \eqref{eq-screw}), for $\eps>0$, subject the boundary condition $\frac{x}{|x|}$, have only one vortex was raised first in \cite{BBH} and has been solved in various cases, but in full generality is still open (see \cite{Ignat_et_al_23} for a nice overview of the known results).

Here we show this result in the discrete setting, giving a very robust and constructive way of clearing out possible dipoles, without increasing the energy, and therefore exhibiting a minimiser with only one singularity. Our result applies to a quite general class of energies and boundary conditions. Moreover, it is a purely energetic argument that does not rely on the symmetry of the lattice nor of the boundary condition. Indeed, we show our result in the more general context of planar graphs, using a combinatorial argument based on the min-cut/max-flow theorem. Similar arguments have been first introduced in the study of singular radial connection for the Yang-Mills energy by the second author \cite{petrache2015singular}.
By a suitable reformulation of the problem on the dual graph, the topological singularities can be regarded as sinks or wells of a flow, and the argument consists in modifying the flow without increasing the energy, without changing the boundary condition and removing the dipoles.
We show by examples that the hypotheses used in this process are optimal and cannot be relaxed.

\section{Discrete models for material defects}\label{material-defects}

Here we introduce the discrete energies of interest and we state the main results. We will follow the approach of \cite{AO}; specifically, we will use the formalism and the notation in \cite{AlCicPo} and \cite{AlDLGarPo}  (see also \cite{Po}).
We consider a Bravais lattice in $\R^2$,  $\mathcal L=\{z_1 a_1+z_2 a_2 \,:\ z=(z_1,z_2)\in \Z^2\}$, where $a_1, a_2\in \mathbb R^2$ are fixed and form a basis. Cases of interest include the triangular lattice and the square lattice: without loss of generality, we will use the latter, i.e., ${\mathcal{L}}=\Z^2$, the other cases being recovered by a change of variable.
%
%
 
Let $\Omega\subset\R^2$ be a bounded open set. For every $\eps>0$ we define $\Omega_\eps\subset \Omega$ as follows
\begin{equation}
    \label{eq-dominio-eps}
    \Omega_\eps:=\bigcup_{i\in\eps\Z^2:\,i+\eps Q\subset \overline\Omega}(i+\eps Q), 
\end{equation}
where $Q=\left[0,1\right]^2$ is the unit square. Moreover, we set $\Omega_{\eps}^0:=\eps\Z^2\cap\Omega_\eps$, and the set of bonds
$$
\Omega_{\eps}^1:=\left\{(i,j)\in\Omega_{\eps}^0\times\Omega_{\eps}^0:\,\,|i-j|=\eps\right\}
$$ 

Finally, we define the \emph{discrete boundary} of $\Omega$ as
\begin{equation}\label{discrbdry}
\partial_\eps\Omega:=\partial\Omega_{\eps}\cap\eps\Z^2,
\end{equation}
and we denote by
$
\partial^+_\eps\Omega$
 the set of \emph{boundary bonds positively (counter-clockwise) oriented} (i.e., bonds $(i,j)\in \Omega_\eps^1$ and such that $(j-i)^\perp$, with notation $(x_1,x_2)^\perp:=(x_2,-x_1)$, points outside the domain $\Omega_\eps$).
Note that $\partial_\eps\Omega\subset\Omega_\eps^0$.

\begin{figure}[h]
\hskip4cm{\def\svgwidth{300pt}
\begingroup%
  \makeatletter%
  \providecommand\color[2][]{%
    \errmessage{(Inkscape) Color is used for the text in Inkscape, but the package 'color.sty' is not loaded}%
    \renewcommand\color[2][]{}%
  }%
  \providecommand\transparent[1]{%
    \errmessage{(Inkscape) Transparency is used (non-zero) for the text in Inkscape, but the package 'transparent.sty' is not loaded}%
    \renewcommand\transparent[1]{}%
  }%
  \providecommand\rotatebox[2]{#2}%
  \newcommand*\fsize{\dimexpr\f@size pt\relax}%
  \newcommand*\lineheight[1]{\fontsize{\fsize}{#1\fsize}\selectfont}%
  \ifx\svgwidth\undefined%
    \setlength{\unitlength}{506.46967615bp}%
    \ifx\svgscale\undefined%
      \relax%
    \else%
      \setlength{\unitlength}{\unitlength * \real{\svgscale}}%
    \fi%
  \else%
    \setlength{\unitlength}{\svgwidth}%
  \fi%
  \global\let\svgwidth\undefined%
  \global\let\svgscale\undefined%
  \makeatother%
  \begin{picture}(1,0.52795773)%
    \lineheight{1}%
    \setlength\tabcolsep{0pt}%
    \put(0,0){\includegraphics[width=\unitlength,page=1]{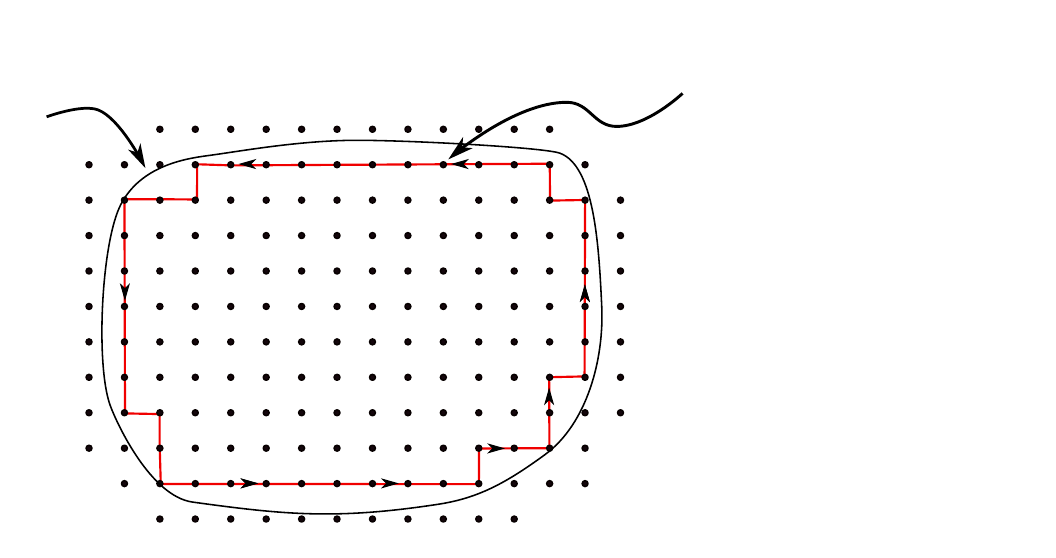}}%
    \put(0.63885349,0.45287799){\makebox(0,0)[lt]{\lineheight{1.25}\smash{\begin{tabular}[t]{l}$C=\partial_\varepsilon^+\Omega$\end{tabular}}}}%
    \put(-0.00149433,0.39401194){\makebox(0,0)[lt]{\lineheight{1.25}\smash{\begin{tabular}[t]{l}$\Omega$\end{tabular}}}}%
    \put(0,0){\includegraphics[width=\unitlength,page=2]{dominio-discreto2.pdf}}%
    \put(0.65807753,0.24497801){\makebox(0,0)[lt]{\lineheight{1.25}\smash{\begin{tabular}[t]{l}$i_0$\end{tabular}}}}%
  \end{picture}%
\endgroup%
}
\caption{A discrete domain (portion of a square lattice) and its discrete boundary.}
\label{bordo-discreto2}
\end{figure}

\subsection{The $\mathcal{SD}$-energy}\label{sec-SD}
The discrete screw dislocation energy, introduced in \cite{AlCicPo} and analysed in details in \cite{AlDLGarPo}, is defined on scalar functions $u:\Omega_\eps^0\to\R$ which represent the scaled vertical displacement of atoms (the actual vertical displacement would be $\eps u$). The energy accounts for crystal invariance and reads in its simplest form as
\[
   \mathcal{SD}_\eps(u):=\sum_{(i,j)\in \Omega_\eps^1}\mathsf{dist}^2(u(i)-u(j),\mathbb Z).
\]
We define the \emph{differential} $du$ of $u$ as a function $du: \Omega_\eps^1\to \mathbb R$ given by
\begin{equation}\label{du}
du(i,j):=u(j)-u(i).
\end{equation}
Note that by definition $du(i,j)=-du(j,i)$ for all $(i,j)\in \Omega_\eps^1$.
Now we define 
\begin{equation}\label{eq-proiezione}
\pi(y):= y-\argmin\{|y-z|:\ z\in \Z\}\in [-1/2,1/2]   
\end{equation}
where, when the minimiser is not unique, we choose the one closest to $0$. In particular this gives that
\[
\pi\big(du(i,j)\big)=-\pi\big(-du(i,j)\big),
\]
and hence $\pi(du(i,j))=- \pi(du(j,i))$. We also observe that in general
\[
\pi\big(du (i,j)\big) =\pi(u(j)-u(i)) \neq \pi(u(j))- \pi(u(i)),
\]
and
$$
|\pi(y)|= \min\{|y-z|:\ z\in \Z\}= \dist(y,\Z).
$$
Therefore, the energy $\mathcal{SD}_\eps$ can equivalently be written as
\[
\mathcal{SD}_\eps(u)=\sum_{(i,j)\in \Omega_\eps^1}|\pi(du(i,j))|^2.
\]
\begin{figure}[h]
\centerline{\includegraphics[scale=.3]{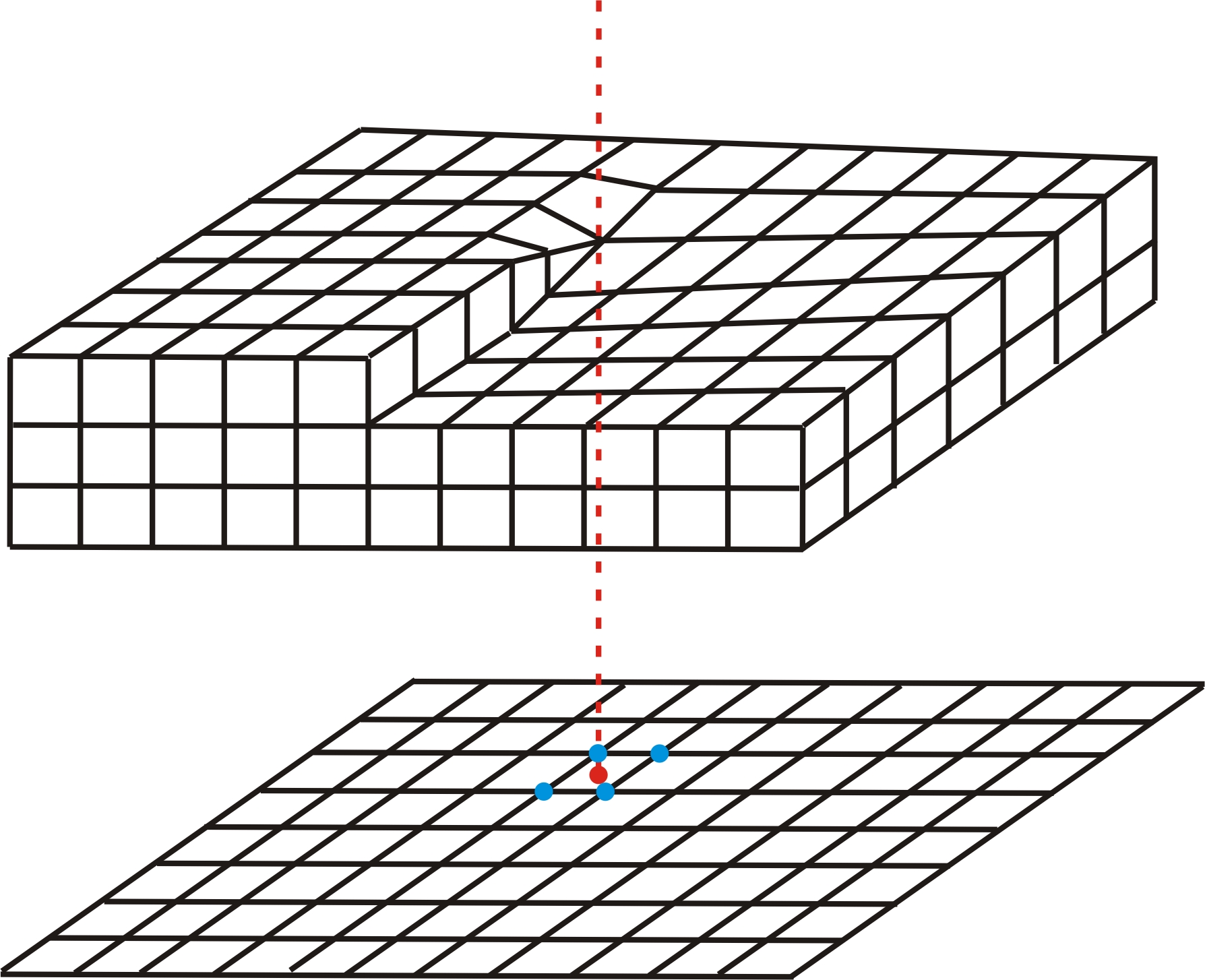}}
\caption{Schematic representation of a screw dislocation in a cubic lattice}
\label{fig-screw}
\end{figure}

Here we introduce the notion of discrete topological singularities. Given a function $u:\Omega_\eps^0\to \R$ we define the discrete 
circulation of $\pi(du)$ on a cell $i+\eps Q$ as
\begin{equation}\label{disctop}
\begin{split}
\alpha_u(i):= \pi (du)(i,i+\eps e_1) 
+   
\pi (du)(i+\eps e_1,i+\eps e_1+\eps e_2)\\
- \pi (du)(i+\eps e_2, i+\eps e_1+\eps e_2) - \pi (du)(i,i+\eps e_2) .
\end{split}
\end{equation}
Notice that, by definition, for every $(i,j)\in \Omega_\eps^1$ there is $z(i,j)\in \Z$ such that
\[
\pi(du(i,j))= du(i,j)+z(i,j).
\]
Therefore, we get immediately that $\alpha_u(i)\in\Z$.
We can actually show that $\alpha_u(i)\in \{-1,0,1\}$. Indeed, since $\pi(du)\in [-\frac12,\frac12]$, then $\alpha_u(i)\in \{-2,-1,0,1,2\}$. In order to exclude the values of $\pm 2$, first note that if $\alpha_u(i)=2$ then necessarily
\begin{gather}\label{eq-unmezzo}
\begin{split}
&\pi (du)(i,i+\eps e_1) 
=     \pi (du)(i+\eps e_1,i+\eps e_1+\eps e_2)=\\
&= - \pi (du)(i+\eps e_2, i+\eps e_1+\eps e_2) =
- \pi (du)(i,i+\eps e_2)=\frac12,
\end{split}
\end{gather}
the case with circulation $-2$ being analogous. Next, by the very definition of $\pi$, $\pi(du(i,j))=\frac12$ if and only if $du(i,j)=\frac12 + z(i,j)$ with $z(i,j)\in\N$ (in particular $z(i,j)$ non-negative). One can check that this property is not compatible with \eqref{eq-unmezzo}.

The circulation from \eqref{disctop} defines a discrete notion of $\curl$ for $\pi(du)$ around the face $i+\eps Q$, which in general may be non zero, and that can be interpreted,  
borrowing the terminology of the models for dislocations in crystals, as a screw dislocation of Burgers vector $b_i e_3$, $b_i=\alpha_u(i)\in \{-1,0,1\}\subset\Z$.  Similarly we can define the circulation of $\pi(du)$ along a closed circuit $C=\cup_{(i,j)\subset\partial_\eps D}(i,j)=(i_0,i_1)\cup(i_1,i_2)\cup\ldots\cup (i_n,i_0)$ which is the boundary of a discrete domain $D_\eps$, as
\begin{equation}
    \label{eq-circulation}
    \alpha_u(C)=\sum_{k=0}^{n-1}\pi(du)(i_k,i_{k+1}) +\pi(du)(i_n,i_0).
\end{equation}

\begin{remark}
    Note, for future reference, that a good way to understand the quantity $\alpha_u(i)$ is to write 
$$
v = v(u):= e^{2\pi i u}.
$$
The presence of a cell $i+\eps Q$ with non zero curl then corresponds to the presence of a vortex structure for $v$, and the quantity $\alpha_u(i)$ represents a notion of discrete vorticity for the $\sph^1$ valued field $v$.
\end{remark}

 Finally, we define the vorticity measure (or dislocation density)  $\mu(u)$ as follows
\begin{equation}\label{defvor}
\mu(u)=\mu_u:= \sum_{i\in\Omega_\eps^0}\alpha_u(i) \delta_{i+ \frac{\eps}{2} (e_1 + e_2)}.
\end{equation}
This definition of vorticity extends to $\mathcal \sph^1$ valued fields in the obvious way, by setting, with an abuse of notation, $\mu_v := \mu_u$ where $u$ is any function  such that $v(u) = v$. 

With this, from \eqref{eq-circulation} we have
\begin{equation}
    \alpha_u(\partial_\eps^+ D)= \mu_u(D_\eps),
\end{equation}
where $\partial_\eps^+ D$ is oriented counterclockwise. In this sense $\mu(u)$ represents the discrete $\curl$ of the map $\pi(du)$.

The problem that we address here is the following: we fix $u_0:\Omega_\eps^0\to \R$ and we consider the minimization problem
\begin{equation}\label{eq-min-SD}
    \min\big\{\mathcal{SD}_\eps(u) : \ u:\Omega_\eps^0\to \R,\ u(i)=u_0(i)\ \forall \ i\in \partial_\eps\Omega \big\}.
\end{equation}

Our main question of interest is then which type of topological conditions on the boundary datum $u_0$ guarantee the existence of minimisers free of defects or having only one singularity in $\Omega$.

\subsection{The ${XY}$-energy}\label{sec-XY}
The model presented above is equivalent at leading order to the well known XY model for spin systems in statistical mechanics (see for instance \cite{AlCic} and the references therein).
For any $v:\Omega_\eps^0\to\mathbb{S}^1$, we define
\begin{equation}\label{xy1}
\mathcal{XY}_\eps(v):=\frac{1}{2}\sum_{(i,j)\in\Omega_{\eps}^1}|v(i)-v(j)|^2.
\end{equation}
We can rewrite this energy in terms of the phase $u$ of $v$:
\begin{equation}\label{XYvsSD}
\mathcal{XY}_\eps(v) = \sum_{(i,j)\in\Omega_{\eps}^1}\big(1-\cos(2\pi (u(i)-u(j)) \big)\qquad \text{ with } \ v= e^{2\pi i u},
\end{equation}
which we notice can be also written as 
\begin{equation}\label{XYf}
\mathcal{XY}_\eps(v) 
= \sum_{(i,j)\in\Omega_{\eps}^1}f\big(\dist(u(i)-u(j), \Z)\big)
=\sum_{(i,j)\in\Omega_{\eps}^1}f\big(|\pi(du(i,j))|\big)\qquad \text{ with } \ v= e^{2\pi i u}
\end{equation}
where $f(t)=1-\cos(2\pi t)$, a function that we note to be increasing for $t\in[0,1/2]$.
We notice also that  
\begin{equation}\label{FmaggXY}
\mathcal{SD}_\eps(u)\ge \frac1{2\pi^2} \, \mathcal{XY}_{\eps}(e^{2\pi i u}).
\end{equation}
In the energy regime in which singularities emerge (i.e., for energy of the order $|\log\eps|$), the $\mathcal{XY}_\eps$ and $\mathcal{SD}_\eps$ models have the same asymptotic expansion in terms of $\Gamma$-converges (see \cite{AlCicPo} and \cite{AlDLGarPo}).

\begin{remark}\label{zerozero}

Another very classical (continuum) model which can be proved to be equivalent (in certain regimes) to $\mathcal{XY}_\eps$ and $\mathcal{SD}_\eps$ is the Ginzburg Landau model for superconductivity.  
In particular one can see that the notion of vortices introduced above has a very natural continuum (topological) counterpart.

To clarify this, set  $\left\{T_i^\pm \right\}$ to be the family of the $\eps$-simplices of $\R^2$ for $i\in\eps\Z^2$, where $T_i^+$ has vertices $\{i, i+ \eps e_1, i +\eps e_2\}$ and $T_i^-$ has vertices $\{i, i-\eps e_1, i-\eps e_2\}$.
For any $v:\Omega_{\eps}^0\to\sph^1$, we denote by $\tilde v:\Omega_\eps\to\R^2$ the piecewise affine interpolation of $v$ on the triangulation $\left\{T^\pm_i \right\}$.
It is easy to see that, up to boundary terms,  $\mathcal{XY}_\eps(v)$  corresponds to the Dirichlet energy of $\tilde v$ in $\Omega_\eps$; more precisely
\begin{equation}\label{disccont}
\frac{1}{2}\int_{\Omega_{\eps}}|\nabla\tilde{v}|^2 d x+\frac{1}{2}\int_{(\partial\Omega_\eps)_\eps}|\nabla\tilde v|^2dx\ge \mathcal{XY}_{\eps}(v)\ge\frac{1}{2}\int_{\Omega_{\eps}}|\nabla\tilde{v}|^2 d x,
\end{equation}
where  $(\partial\Omega_\eps)_\eps=\{i+\eps Q: i\in \partial_\eps\Omega\}$. 
Moreover, one can easily verify that  if $|\tilde v|> c>0$ on $\partial  A_\eps$, for some open set $A\subset\Omega$, then
\begin{equation}\label{degdisccont}
\mu_v(A_\eps)=\deg(\tilde v,\partial A_\eps),
\end{equation}
where the \emph{degree} of a function $w\in H^{\frac{1}{2}}(\partial A;\R^2)$, with $|w|\ge c>0$, is defined by
\begin{equation}\label{deg}
\deg(w,\partial A):=\frac{1}{2\pi}\int_{\partial A}\left(\frac{w}{|w|}\times \nabla\frac{w}{|w|}\right)\cdot\tau\ ds\,,
\end{equation}
with  $v\times \nabla v:= v_1\nabla v_2 - v_2\nabla v_1$ for $v\in H^1(A; \R^2)$.
In particular, whenever $|\tilde v| >0$ on $i+\eps Q$ we have $\mu_v(i+\eps Q) = 0$.  
\end{remark}

\subsection{Singularities of the minimisers}\label{ssec:exampleball}
The questions raised in Sections \ref{sec-SD} and \ref{sec-XY} can be rephrased in a framework which includes and generalises the two settings. We consider the discrete energy
\begin{equation}
    \label{eq-energy-general}
    E_\eps(u) = \sum_{(i,j)\in \Omega_\eps^1}g(u(i)-u(j)),
\end{equation}
where $u:\Omega_\eps\to\R$ and $g:\R\to[0,+\infty)$ satisfies
\begin{equation}\label{eq-energy-gen-f}
    g(t)= f(\dist(t,\Z)),
\end{equation}
with $f:[0,1/2]\to \R$ non-descreasing. 

We denote by ${\mathcal A}_\eps(\Omega):=\{u:\Omega_\eps\to \R\}$.
Here we assume that $\Omega$ is an open simply connected domain. Then, we consider a function $u_0\in {\mathcal A}_\eps(\Omega)$ and the following minimimum problem
\begin{equation}\label{eq-min-general}
    \min\big\{E_\eps(u) : \ u\in {\mathcal A}_\eps(\Omega)\ , \  u(i)=u_0(i)\ \forall \ i\in \partial_\eps\Omega \big\}.
\end{equation}

We will make two alternative assumptions on the boundary condition. Let 
$$
\partial^{+}_\eps\Omega:=C=(i_0,i_1)\cup (i_1,i_2)\cup\ldots\cup (i_n,i_0),
$$  
with $(i_k,i_{k+1}), (i_n,i_0)\in \Omega^1_\eps$ being the discrete boundary oriented counter-clockwise:
\begin{itemize}
  \item[{(H0)}] $\pi(du_0(i,j))=du_0(i,j)$ for all bonds $(i,j)\in C$ except at most one;
    \item[{(H1)}] $\alpha_{u_0}(C)=0$ and $\sum_{k=0}^{n-1} |\pi(du_0)(i_k,i_{k+1})| + |\pi(du_0)(i_n,i_0)|\leq 1$;
    \item[{(H2)}] $|\alpha_{u_0}(C)|=1$ and $\sum_{k=0}^{n-1} |\pi(du_0)(i_k,i_{k+1})| + |\pi(du_0)(i_n,i_0)|= 1$.
\end{itemize}

\begin{remark}
    \label{rem-genaral-boundary-data}
Assumption (H0) is satisfied by a very large class of boundary conditions. For example, for any continuous $v_0:\overline\Omega\to \mathbb{S}^1$ 
we can construct a lifting $u_0$ of $v_0$ in a neighbourhood of $\partial \Omega$, i.e., $v_0(x)= e^{i u_0(x)}$,
which is continuous at all points of $\partial_\eps\Omega$ except $i_0$ and satisfies (H0) on $\partial_\eps\Omega$ for $\eps$ small enough. Precisely, if $\omega(t)$ is the modulus of continuity of $v_0$ on $\partial_\eps\Omega$ ($\omega$ increasing with $\omega(t)\to 0$ as $t\to 0$), in order to have (H0) be satisfied it is enough to choose $\eps\leq \omega^{-1}(\frac1{2\sqrt2 })$. Indeed, given $(i_k,i_{k+1})\in C$, with   $|i_k-i_{k+1}|=\eps$ we have
    $$
    |u_0(i_{k+1})-u_0(i_k)|\leq \sqrt2|v_0(i_{k+1})-v_0(i_k)|\leq \sqrt2 \omega(\eps)
    $$
so that if $\eps\leq \omega^{-1}(\frac1{2\sqrt2})$, then $|u_0(i_{k+1})-u_0(i_k)|\leq \frac12$ for all $k=\{0,\ldots, n-1\}$, and therefore it satisfies (H0).
\end{remark}

Our main result is the following.


\begin{thm}\label{th-main-general}
Let $u_0:\partial_\eps\Omega\to \R$ satisfy assumption (H0).
\begin{enumerate}
\item If $u_0$ satisfies in addition (H1), then there exists a minimiser $u$ of $E_\eps$ with no singularities in $\Omega_\eps$, i.e., with $\mu_u=0$. Furthermore, if in (H1) the strict inequality holds, then all minimizers have no singularites in $\Omega_\eps$.
\item If $u_0$ satisfies (H2), then there exists a minimiser $u$ $E_\eps$ with exactly one singularity in $\Omega_\eps$, i.e., 
$$\mu_u=\mu_{u_0}(\Omega_\eps)\delta_{x_0}$$ 
for some $x_0\in\Omega_\eps^0+\frac{\eps}{2}(e_1+e_2)$.
\end{enumerate}
\end{thm}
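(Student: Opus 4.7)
\textbf{Proof plan for Theorem~\ref{th-main-general}.}

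The plan is an iterative dipole-removal argument, implemented via a max-flow/min-cut reformulation on the planar dual graph of $\Omega_\eps$. Starting from an arbitrary minimizer, one cancels pairs of opposite-sign interior vortices by energy-preserving modifications, until reaching the minimum vorticity compatible with the boundary charge: zero under (H1) and exactly one under (H2).

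Existence of a minimizer is standard: since $g(t)=f(\dist(t,\Z))$ is $\Z$-periodic, $E_\eps$ depends on $u$ only modulo $\Z$ at interior vertices, so minimization on the compact set $\{u(i)\in[0,1):i\in\Omega_\eps^0\setminus\partial_\eps\Omega\}$ with fixed boundary values yields a minimizer. I would then form the planar dual $G^*$ of $\Omega_\eps$: vertices are the interior primal faces (cells $i+\eps Q$) together with an external vertex $v_\infty$ representing $\mathbb R^2\setminus\Omega_\eps$; edges are the primal bonds $e\in\Omega_\eps^1$, with boundary bonds of $\partial^+_\eps\Omega$ connecting to $v_\infty$. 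Viewing $\pi(du)$ as a function on dual edges, the vorticity $\mu_u$ becomes a signed integer charge distribution on the vertices of $G^*$ with total mass $\alpha_{u_0}(C)$.

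The key lemma is a \emph{dipole-removal step}: for a minimizer $u$ with $\alpha_u(F_+)=+1$ and $\alpha_u(F_-)=-1$, one produces $u'$ with the same boundary data, the same energy, and $\mu_{u'}=\mu_u-\delta_{F_+}+\delta_{F_-}$. The construction finds a dual path in $G^*$ from $F_+$ to $F_-$ along \emph{saturated} edges (primal bonds with $|\pi(du)(e)|=\tfrac12$) and flips the sign of $\pi(du)$ on each path edge by a suitable integer-valued modification of $u$, exploiting the discontinuity of $\pi$ at half-integer values of $du$. At saturated edges the flip preserves $|\pi(du)|=\tfrac12$ (hence the edge energy), while shifting the integer circulations by $-1$ at $F_+$, $+1$ at $F_-$, and invariantly at intermediate faces. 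Existence of such a saturated dual path is guaranteed by max-flow/min-cut on $G^*$ with capacity $1$ on saturated dual edges, $0$ elsewhere, and a boundary-link capacity controlled by $\sum_k|\pi(du_0)(i_k,i_{k+1})|$ from (H1)/(H2): if no saturated path existed, planar duality would produce a primal cycle $\Gamma$ enclosing $F_+$ along which $|\pi(du)(e)|<\tfrac12$ on every bond, and a real-valued perturbation of $u$ on one side of $\Gamma$ would then strictly decrease $E_\eps$, contradicting minimality.

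Iterating the dipole-removal cancels all opposite-sign pairs: under (H1) the total interior charge is zero, yielding a vortex-free minimizer; under (H2) a single vortex of sign $\alpha_{u_0}(C)$ survives. For strict inequality in (H1), a refinement of the planar-dual perturbation rules out interior vortices in every minimizer --- since the boundary link lacks the capacity to absorb even a single unit of charge, any interior vortex produces a strictly sub-optimal configuration. The main obstacle is the max-flow/min-cut step: calibrating capacities and executing the planar-dual perturbation so that the quantitative boundary hypotheses (H1)/(H2) yield the contradiction; this is the ``ad-hoc planar graph duality'' mentioned in the introduction.
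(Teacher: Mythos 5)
Your high-level architecture (pass to the planar dual, remove dipoles via max-flow/min-cut, keep the boundary data fixed) is the same as the paper's, but your key lemma is carried out through minimality and saturation, and that step has a genuine gap. You need a dual path of \emph{saturated} bonds, i.e.\ bonds with $|\pi(du)(e)|=\tfrac12$, joining the two opposite charges, and you justify its existence by claiming that otherwise there is a primal cycle $\Gamma$ enclosing $F_+$ with $|\pi(du)|<\tfrac12$ on all its bonds, and a real-valued perturbation across $\Gamma$ would strictly decrease $E_\eps$. This is false. First, $f$ in \eqref{eq-energy-gen-f} is only non-decreasing, so no argument based on a strict energy decrease can work at this level of generality. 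Second, even for the strictly convex $\mathcal{SD}_\eps$ energy the dichotomy fails: minimizers carrying an interior dipole exist under (H0)--(H1) (the configuration with central value $\tfrac78$ in Figure \ref{Counterexample}); there the only bonds with $|\pi(du)|=\tfrac12$ are boundary bonds, whose values are frozen by the boundary condition, so no usable saturated path joins the two charged faces --- yet the configuration is a minimizer, so no perturbation decreases the energy. More generally, around a vortex of a minimizer the bonds typically carry $|\pi(du)|$ well below $\tfrac12$ (about $\tfrac14$ on the four bonds around a symmetric vortex), so saturation is simply not a feature of minimizers. A secondary gap: even granting a saturated dual path, flipping the sign of $\pi(du)$ on exactly the crossed bonds by integer shifts of $u$ is not justified (an integer shift at one vertex acts on all saturated bonds incident to it), and rebuilding a function with unchanged boundary values from modified circulation data is itself a nontrivial step (this is Proposition \ref{prop:bdryvalue} in the paper). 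Finally, your proposal never extracts the quantitative content of (H1)/(H2); they enter only as a vague ``boundary-link capacity''.

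The paper's route avoids minimality entirely in the modification step: it proves a comparison result valid for \emph{every} $u$ satisfying (h0) and (h1)/(h2), namely Theorems \ref{thm:removedipoles} and \ref{thm:removedipoles1} applied to $\alpha=\pi\circ(du)^\perp$ with capacity $c=|\alpha|$ (not a $0/1$ saturation capacity). The hypotheses $\|\alpha\|_\partial\le 1$ and $\mathsf{Flux}(\alpha)\in\{0,\pm1\}$, combined with integrality of the divergence, force the max flow to saturate the boundary flux; this is exactly where (H1)/(H2) are used, and it yields a $1$-form $\gamma$ with $|\gamma|\le|\pi(du)|$ on every edge, equal to $\pi(du)$ on boundary edges, and with the prescribed divergence (zero, or one unit charge). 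Proposition \ref{prop:bdryvalue} then reconstructs $\widetilde u$ with the same boundary values and $\pi(d\widetilde u)=\gamma$, and monotonicity of $f$ gives $E_\eps(\widetilde u)\le E_\eps(u)$, so applying this to any minimizer produces the desired minimizer. To repair your argument you would have to replace the saturated-path/strict-decrease step by a comparison statement of this kind.
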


The above theorem clearly applies to both the $\mathcal{XY}_\eps$ energy and $\mathcal{SD}_\eps$ energy.
The proof of this theorem will be a consequence of a more general result (see Theorem \ref{thm:planargraphSD}) in the context of  bi-directed graphs $(E,V)$ applied to the particular case $V=\Omega_\eps^0$ and $E=\Omega_\eps^1$.

\begin{remark}
It is not guaranteed, and in fact it can be false, that \emph{all} minimisers of $E_\eps$ are free of singularities if (H1) holds, or that all minimisers have only one singularity if (H2) is verified. Figure \ref{Counterexample} illustrates a case of a boundary condition satisfying conditions (H0) and (H1) and two minima for the $\mathcal{SD}_\epsilon$-energy (i.e., for $f(t)=\frac12 t^2$ in notation \eqref{eq-energy-gen-f}) with equal boundary datum, one with no vortices in the inner squares (left image) and another with two unit charges of opposite signs in the two top squares (right image).

\begin{figure}[h]
\centerline{{\def\svgwidth{220pt}
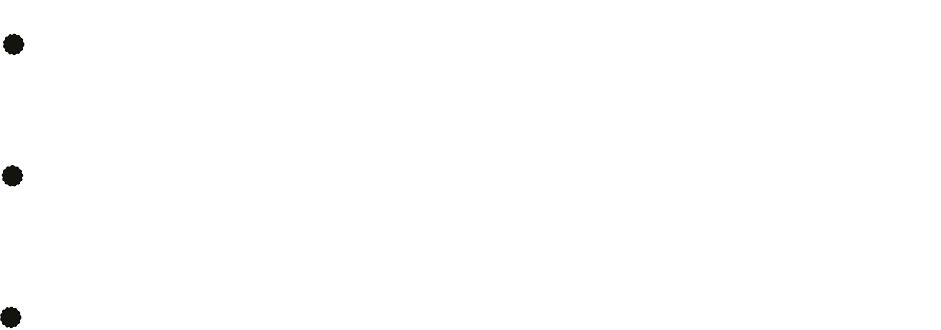}}
\caption{Two minima of problem \eqref{eq-min-SD} for a $2\times2$ square lattice with a boundary conditions satisfying (H0) and (H1). We indicate a charge of $+1$ and $-1$ by a $+$ and a $-$ sign at the centers of corresponding faces. Empty faces have zero charge.}
\label{Counterexample}
\end{figure}  
\end{remark}

\subsubsection{The one-vortex solution in star-shaped domains}
Explicit examples can be given in star shaped domains $\Omega$.
We assume that for a given $\eps>0$ also the discrete domain $\Omega_\eps$ is star shaped; without loss of generality we assume $0\in \Omega$ and that $\Omega$ and $\Omega_\eps$ are star shaped with respect to $0$. We consider a boundary datum $u_0$ of the form
\[
    u_0\big(r\sin(\theta), r\cos(\theta)\big):=\psi\left(\frac{\theta}{2\pi}\right)\text{ for }r>0,\quad u_0(0):=0,
\]
for some non-decreasing $\psi:[0,1]\to [0,1]$. We are interested in minimizing $\mathcal{SD}_\eps(u)$ over the set of maps
\[
    u:\Omega_\eps\to \R,\quad u(i)=u_0(i) \quad \text{ for }i\in \partial_\eps \Omega.
\]
Let $C=(i_0,i_1)\cup (i_1,i_2)\cup\ldots\cup (i_n,i_0)$ (with $i_0$ on the positive $x_1$ axis, as in Figure \ref{bordo-discreto2}) be the discrete boundary $\partial^{+}_\eps \Omega$ oriented counter-clockwise. 
Assume that $u_0(i_{k+1})-u_0(i_k) \leq 1/2$ for all $k\in\{0,\ldots, n-1\}$ (as explained in Remark \ref{rem-genaral-boundary-data}, this is guaranteed for instance if $\psi$ is $L$-Lipschitz and $\eps<\frac{\pi\dist(0,\partial\Omega)}{L}$). Then, $u_0$ satisfies assumptions (H0) and (H2) on $\partial^{+}_\eps\Omega$. Indeed  one can immediately check that if  $k\in\{0,\ldots, n-1\}$ we have $$|\pi(du_0)(i_k,i_{k+1})|=du_0(i_k,i_{k+1})=u_0(i_{k+1})-u_0(i_k),$$ while  $|\pi(du_0)(i_n,i_0)|= 1+ (u_0(i_0)-u_0(i_n))$, so that 
\begin{equation}\label{eq-condizioneh2-esempio}
    \sum_{k=0}^{n-1}|\pi(du_0)(i_k,i_{k+1})| + |\pi(du_0)(i_n,i_0)|=1.
\end{equation}
Therefore as a consequence of Theorem \ref{th-main-general} we get that there exists a minimiser with only one singularity in $\Omega_\eps$. The same conclusion holds for the minimiser of the  ${\mathcal {XY}}_\eps$ energy with the typical degree $1$ boundary condition $v_0(x)=e^{u_0(x)}$ on $\partial_\eps\Omega$.
For example,  if $\Omega=B_R$ and $v_0(x)=\frac{x}{|x|}$ we obtain that there is a minimiser of the $\mathcal{XY}_\eps$  with exactly one vortex in $(B_R)_\eps$. We believe that if we have a not too small number of interior lattice points, all minimizers should exhibit at most one vortex. However the approach from this work is not sufficient to show that removing the dipoles \emph{strictly decreases} the energy in this case.


\medskip

The rest of the paper is devoted to the reformulation of the above questions in more general terms using bi-directional graphs $G=(V, E)$.

\section{The Graph formulation}

Our proof strategy for the general version of Theorem \ref{th-main-general} is to first prove a similar result on $1$-forms with integral divergence on general graphs (see Section \ref{sec-1fid}). In this section we first state in Theorem \ref{thm:planargraphSD} a version of our main theorem on a suitable class of admissible planar graphs, we then translate it in Lemma \ref{lem:dual} for dual graphs. We prove the corresponding statements in Section \ref{sec-1fid}.
%
%
%
\subsection{Planar complexes and formulation of the main result}
A {\bfseries bidirectional} graph $G=(V,E)$, is a graph with edges satisfying the property
\begin{equation}\label{e.bidirectional}
(a,b)\in E\Leftrightarrow (b,a)\in E.     
\end{equation}
A finite graph $G=(V,E)$ will be called \textbf{planar} when the vertices $V$ are bijectively identified with a finite set of points in the plane, i.e., $V\subset \mathbb R^2$, and the edges $e=(a,b)\in E$ are identified with oriented curves in $\mathbb R^2$ containing as endpoints the vertices $a,b\in V$, with the condition that such curves intersect only at their endpoints and this happens if and only if the corresponding edges share the corresponding vertex. If $a,b\in V$ and $(a,b)\in E$ we also write $a\sim b$. 
By abuse of notation we denote the realizing curve corresponding to edge $(a,b)\in E$ by $(a,b)$. Furthermore, we require that whenever $(a,b), (b,a)\in E$, then $-e:=(b,a)$ is identified with the curve $(a,b)$ with opposite orientation. 
\begin{figure}[h]
\centerline{{\def\svgwidth{200pt}
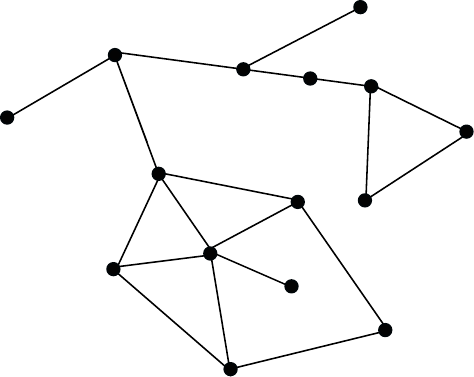}}
\caption{Example of a planar graph}
\label{planar graph}
\end{figure}  
If $G=(V,E)$ is a planar graph, then its \textbf{faces} are the bounded connected components of $\mathbb R^2\setminus \bigcup_{e\in E}e$. We reserve the letter $F$ for denoting the set of faces of a planar graph, and we call $(V,E,F)$ the associated \textbf{planar complex}. We define the \textbf{exterior of $G$}, denoted $f_{\mathrm{out}}$, to be the unbounded connected component of $\mathbb R^2\setminus \bigcup_{e\in E}e$. 

For $f\in F\cup\{f_{\mathrm{out}}\}$ we denote by $\partial f$ the \textbf{oriented boundary complex} of $f$, i.e., the set of the edges $e\in E$ which cover the topological boundary of $f$ with standard counterclockwise orientation if $f\in F$, or clockwise orientation if $f=f_{\mathrm{out}}$. Here edges that are interior to $f$ appear twice in $\partial f$, with a copy of each orientation.

For a given planar graph $G=(V,E)$ with face set $F$, $e\in E$ is an \textbf{interior edge} if $e$ is in common between two faces $f_1,f_2\in F$. A \textbf{boundary edge} will be: (a) an edge of $\partial f_{\mathrm{out}}$;
or equivalently 
 (b) an edge $e\in E$ which belongs either to the boundary of a single face $f\in F$, or does not belong to the boundary of any face $f\in F$.
 The \textbf{boundary complex} $(V^\partial, E^\partial)$ of $G=(V,E,F)$ is defined as follows:
\[
    E^\partial :=\{-e:\ e\in \partial f_{\mathrm{out}}\},\quad V^\partial:=\{a,b: (a,b)\in E^\partial\}.
\]
Note that the boundary complex $(V^\partial, E^\partial)$ is an oriented but non-bidirected subgraph of $(V, E)$.

We say that a graph $G$ is \textbf{connected} if the union of its edges is a connected set.
A planar complex $(V,E,F)$ with boundary complex $(V^\partial, E^\partial)$ is \textbf{admissible} if the graph $(V,E)$ is connected and all edges of $E^\partial$ belong to $\partial f$ for at least one $f\in F$. 

%
Next, as in the previous section, for $u:V\to\mathbb R$ we define $du:E\to\mathbb R$ by $$du(i,j):=u(j)-u(i),$$ and we say that a map $\alpha:E\to\mathbb R$ is a \textbf{discrete $1$-form} if $\alpha(j,i)=-\alpha(i,j)$ whenever $(i,j)\in E$ (and then $(j,i)\in E$). Given a subsets $E'\subseteq E$ with a little abuse of notation we write
\begin{equation}\label{eq:alphaofedges}
    \alpha\left(E'\right):=\sum_{(a,b)\in E'}\alpha(a,b).
\end{equation}
\begin{dfn} If $(V,E,F)$ is a planar complex and $\alpha: E\to\mathbb R$ is a $1$-form, we define $\mathsf{curl}\,(\alpha):F\to\mathbb R$ as
\[
    \mathsf{curl}\,(\alpha)(f):=\alpha(\partial f).
\]
For a fixed planar complex $(V,E,F)$ we say that a discrete $1$-form $\alpha:E\to\mathbb R$ has \textbf{integer-valued curl} if $\mathsf{curl}\,(\alpha)(f)\in\mathbb Z$ for all $f\in F$.
\end{dfn}
Now we formulate our main hypotheses. Let $(V,E,F)$ be a planar $2$-dimensional complex with boundary complex $(V^\partial, E^\partial)$.
\medskip
Further, recall that $$\pi(y):=y-\mathsf{argmin}\{|y-z|:\ z\in\mathbb Z\},$$ where, if the minimum is achieved by more than one $z\in \mathbb Z$ (i.e., in the case $y\in \frac12 +\Z$), we set $\pi(y)=y-z$ for the value of $z$ closest to the origin (which gives $-\frac12$ if $y<0$ and $\frac12$ if $y>0$). The energies of interest for functions $u:V\to \mathbb R$ are increasing functions of $|\pi(du(i,j))|=\mathsf{dist}(du(i,j),\mathbb Z)$ for $(i,j)\in E$.
We consider the following hypotheses over maps $u:V\to \mathbb R$.
\begin{enumerate}
    \item[(h0)] The condition $\pi(du(e))=du(e)$ for edges $e=(i,j)\in E^\partial$ holds with the exception of at most one such edge.
    \item[(h1)] $\sum_{e\in E^\partial}\pi(du(e))=0$ and $\sum_{e\in E^\partial}|\pi(du(e))|\leq 1$.
    \item[(h2)] $\sum_{e\in E^\partial}\pi(du(e))\in\{\pm 1\}$ and $\sum_{e\in E^\partial}|\pi(du(e))|= 1$.
\end{enumerate}
\begin{thm}\label{thm:planargraphSD}
Let $(V,E)$ be an admissible planar graph.
Let $u:V\to \mathbb R$ be a function such that $\pi\circ du$ has integer-valued curl and such that (h0) holds. Then the following statements are true.
    \begin{enumerate}
        \item If $u$ satisfies (h1) then there exists a function $\widetilde u:V\to\mathbb R$ such that $\widetilde u = u$ over $V^\partial$, $|\pi(d\widetilde u)|\leq |\pi(du)|$ and $\mathsf{curl}\,(\pi\circ d\widetilde u) = 0$. Furthermore, if (h1) holds with strict inequality $\sum_{e\in E^\partial}|\pi(du(e))|<1$ and $\mathsf{curl}(\pi\circ du)\neq 0$, then $\widetilde u$ can be chosen so that strict inequality $|\pi(d\widetilde u(e))|<|\pi(d u(e))|$ holds on at least one edge $e\in E$.
        \item If $u$ satisfies (h2) then there exists a function $\widetilde u:V\to\mathbb R$ such that $\widetilde u = u$ over $V^\partial$, $|\pi(d\widetilde u)|\leq |\pi(du)|$ and a face $f_0\in F$ such that we have 
        \[
            \mathsf{curl}\,(\pi\circ d\widetilde u)=\left(\sum_{e\in E^\partial}\pi(du(e))\right)\delta_{f_0},
        \]
where
\[        
  \delta_{f_0}(f):=\left\{\begin{array}{ll}0& \text{ if }f\in F\setminus\{f_0\},\\ 1 &\text{ if }f=f_0.\end{array}\right.
 \]
\end{enumerate}
\end{thm}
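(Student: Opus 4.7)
The plan is to reformulate Theorem~\ref{thm:planargraphSD} as a dual-graph statement, then invoke a max-flow/min-cut result on $1$-forms with integer divergence (proved in Section~\ref{sec-1fid}). To dualize: let $V^*:=F\cup\{f_{\mathrm{out}}\}$ and associate to each primal edge $e\in E$ a dual edge $e^*$ joining the two faces adjacent to $e$, oriented by $90^\circ$ counterclockwise rotation; the primal $1$-form $\alpha:=\pi\circ du$ then corresponds to a dual $1$-form $\alpha^*$ whose divergence at each interior dual vertex is $\mathsf{div}(\alpha^*)(f)=\mathsf{curl}(\alpha)(f)\in\Z$. Hypothesis (h0) fixes $\alpha^*$ on dual edges incident to $f_{\mathrm{out}}$ up to at most one exception, while (h1) (resp.\ (h2)) controls the boundary flux by $0$ (resp.\ $\pm 1$) and the total absolute boundary flux by $1$.

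The dualized problem becomes: find $\widetilde{\alpha}^*$ agreeing with $\alpha^*$ on dual edges incident to $f_{\mathrm{out}}$, satisfying $|\widetilde{\alpha}^*|\leq|\alpha^*|$ pointwise, and with either vanishing interior divergence (case (h1)) or divergence concentrated at a single interior vertex $f_0$ (case (h2)). Writing $\widetilde{\alpha}^*=\alpha^*-\beta$, the cost bound forces $\beta$ to have the same sign as $\alpha^*$ edgewise, with $|\beta|\leq 2|\alpha^*|$, while the divergence requirement imposes $\mathsf{div}(\beta)$ to match $\mathsf{div}(\alpha^*)$ at the interior vertices being cleared, with $\beta\equiv 0$ on dual edges incident to $f_{\mathrm{out}}$. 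Orienting each dual edge in the direction of positive $\alpha^*$ and assigning capacity $2|\alpha^*(e^*)|$ recasts this as a standard integer-valued max-flow problem between the positive sources and negative sinks of $\mathsf{div}(\alpha^*)$.

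The main technical obstacle is verifying that the min-cut dominates the required flow. By planar duality, any such cut corresponds to a simple primal cycle $C$ with oriented capacity at most $\sum_{e\in C}2|\alpha(e)|$; the integer charge enclosed is $\alpha(C)$ by a discrete Stokes identity, which is in turn bounded by $\sum_{e\in C}|\alpha(e)|$. Cycles lying entirely in the interior therefore already satisfy the min-cut inequality. The delicate case is cycles enclosing part of $E^\partial$: here hypothesis (h1) or (h2), specifically $\sum_{e\in E^\partial}|\pi(du(e))|\leq 1$, ensures that the boundary flux budget is insufficient for integer charge to escape through $f_{\mathrm{out}}$ in (h1) (forcing internal cancellation), and leaves precisely one unit trapped in (h2).

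Finally, pulling back: $\widetilde{\alpha}$ has the required curl with $|\widetilde{\alpha}|\leq|\alpha|\leq 1/2$ and $\widetilde{\alpha}=\alpha$ on $E^\partial$. In case (h1) it is closed, so on the simply connected complex it lifts to a potential $\widetilde{v}$ with $d\widetilde{v}=\widetilde{\alpha}$ and $\pi(d\widetilde{v})=\widetilde{\alpha}$; case (h2) is analogous, correcting by an integer $1$-form supported along a dual path from $f_0$ to the exceptional boundary edge of (h0). The residue $u-\widetilde{v}$ on $V^\partial$ is integer-valued (by (h0) together with the boundary sums in (h1)/(h2)), so adding integer shifts at each vertex yields $\widetilde{u}$ with $\widetilde{u}=u$ on $V^\partial$ and unchanged $\pi\circ d\widetilde{u}=\widetilde{\alpha}$. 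The strict-inequality refinement in~(1) follows because, when (h1) is strict and $\mathsf{curl}(\pi\circ du)\not\equiv 0$, the required flow $\beta$ is necessarily nonzero on at least one interior edge, strictly decreasing $|\alpha|$ there.
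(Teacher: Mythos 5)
Your overall architecture (dualise, solve a capacitated flow problem with capacities built from $|\pi\circ du|$, pull the corrected dual form back to a potential by integer adjustments) parallels the paper's, and your reformulation $\widetilde\alpha^\perp=\alpha^\perp-\beta$ with a correction flow $\beta$ running between the \emph{interior} charges is a legitimate variant of the paper's construction, which instead takes the new dual form to be the optimal flow form between groups of \emph{boundary} vertices (Theorems \ref{thm:removedipoles} and \ref{thm:removedipoles1}). However, there is a genuine gap in case (2): you never prove that the residual unit of charge can be concentrated at a \emph{single} face, nor how $f_0$ is found. With unbalanced supplies, a maximal partial clearing can leave residual divergence spread over several faces (e.g.\ $+2$ at one face and $-1$ at another), and the retained face cannot be prescribed in advance: a positive face enclosed by a cut of capacity $<1$ cannot ship its unit across that cut, so designating any other face as the recipient of the leftover unit makes your transshipment infeasible. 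Locating $f_0$ is exactly the content of the paper's Theorem \ref{thm:removedipoles1}, proved by induction on the number of positive-divergence vertices: run max-flow from the boundary to an arbitrarily chosen positive vertex; if the flow value is $<1$, use the minimal cut to isolate a subregion with zero flux, clear it via Theorem \ref{thm:removedipoles}, and recurse. The phrase ``leaves precisely one unit trapped'' assumes what has to be proved.

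A second gap is the strict-inequality refinement in part (1). From $|\alpha^\perp-\beta|\le|\alpha^\perp|$ you only know that $\beta$ lies between $0$ and $2\alpha^\perp$ edgewise, so $\beta(e)\neq 0$ does \emph{not} force $|\widetilde\alpha(e)|<|\alpha(e)|$: the value $\beta(e)=2\alpha^\perp(e)$ merely reverses the sign and preserves the modulus, i.e.\ dipoles can be removed by pure sign flips in your formulation. An additional argument is needed; the paper obtains strictness from its different construction by bounding, at a vertex $v$ with nonzero divergence, $\sum_{x\sim v}|\gamma_{\Phi^\star}(v,x)|\le 2T(\Phi^\star)=\|\alpha\|_\partial<1\le\sum_{x\sim v}|\alpha(v,x)|$. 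Finally, a smaller but real issue: your min-cut check compares the enclosed integer charge with the full undirected capacity $\sum_{e\in C}2|\alpha(e)|$, whereas your flow is sign-constrained (directed), so the correct feasibility condition involves only the out-directed crossing edges plus, for cuts reaching the boundary, the boundary flux. The condition does hold under (h1) (any boundary sub-sum of $\pi(du)$ has modulus at most $1/2$, and integrality of the enclosed charge then forces the inequality), but this computation --- which is where $\sum_{e\in E^\partial}|\pi(du(e))|\le 1$ genuinely enters --- is only gestured at in your sketch.
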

\begin{remark}\label{rem-thm-lattices}
As an immediate consequence of the above theorem we deduce Theorem \ref{th-main-general}. Indeed, we apply Theorem \ref{thm:planargraphSD} with $V=\Omega_\eps^0$ and $E=\Omega_\eps^1$, noticing that, in view of  \ref{defvor} the assumptions (h0), (h1) and (h2) in Theorem \ref{th-main-general} coincide with the corresponding conditions in Theorem \ref{thm:planargraphSD}.
\end{remark}
\begin{remark}\label{rem-sharp-hyp}
The hypotheses of Theorem \ref{thm:planargraphSD} are sharp, as can be easily inferred by the counterexamples given the next section, Remark \ref{r:optimal}. In particular, we cannot hope to have an extension of the strong dipole removal result of Theorem \ref{thm:planargraphSD} for $\sum_{e\in E^\partial}|\pi(du(e))|>1$. However, in Appendix \ref{sec:openproblems} we discuss possible extensions of our result in this direction by weakening the hypotheses of the theorem.
\end{remark}
\begin{remark}\label{rem-removeadmiss}
 Theorem \ref{thm:planargraphSD} may be directly extended to connected planar non-admissible graphs. Indeed, removing  the edges that do not belong to the boundary of any face $f\in F$, we reduce to a finite union of planar admissible graphs. The boundary of these subgraphs are contained in $E^\partial$, therefore the boundary condition $\tilde u$ satisfies (h0) and Theorem \ref{thm:planargraphSD} can be applied to each of these graphs separately.
\end{remark}

\subsection{Passing to the dual graph}
For admissible planar graphs $(V,E,F)$ we define the dual graph and reformulate the hypotheses (h0)-(h2) of Theorem \ref{thm:planargraphSD} in the dual graph. Roughly speaking, we pass from the study of discrete vorticity $\mathsf{curl}\,(\alpha)$ for functions on graph vertices, to the study of the discrete divergence $\mathsf{div}\,(\gamma)$ for an associated $1$-form $\gamma$ on the edges of the dual graph. This reformulation allows to deploy the theory of maxflow-mincut duality for flows on the dual graph, which will be exploited in Section \ref{sec-1fid} to prove dual graph analogues of the statements of Theorem \ref{thm:planargraphSD}. In Section \ref{sec:dualityend} we then return to the initial planar complex and complete the proof of Theorem \ref{thm:planargraphSD}. 

Observe that boundary complex edges and vertices play a special role in the formulation of 
Theorem \ref{thm:planargraphSD}, and below we adapted the definition of duality so that it fits well with our proof strategy in Sections \ref{sec-1fid} and \ref{sec:dualityend}.
\begin{dfn}\label{def:dualgraph} 
    Let $(V,E,F)$ be an admissible planar complex, let $(V^\partial, E^\partial)$ be its boundary complex. Then the associated \textbf{(oriented) dual graph} $G^\perp=(V^\perp,E^\perp)$ is given by the following:
\begin{itemize} 
\item \textbf{Interior vertices} $V^{\perp,int}$ will be in bijection to faces $f\in F$, and \textbf{boundary vertices} $V^{\perp,\partial}$ will be in bijection with edges from $E^\partial$. More precisely, we denote by $f^\perp$ the vertex associated to face $f\in F$ and by $v_e$ the vertex associated to boundary edge $e\in E^\partial$. Then we set
\[
    V^{\perp,int}:=\{f^\perp:\ f\in F\},\quad V^{\perp,\partial}:=\{v_e:\ e\in E^{\partial}\},
\]
and we define $V^\perp:=V^{\perp, int}\cup V^{\perp,\partial}$.
\item Let $e=(i,j)\in E^{int}$ belong to the intersection of $\partial f_1$ and $\partial f_2$ for $f_1, f_2\in  F$, with vertex ordering $(i,j)$ fixed so that the unit normal vector field $\nu_{12}$ along $e$ pointing from $f_1$ to $f_2$ is the counterclockwise rotation by $\pi/2$ of $\dot e$. Then we set 
\[
    e^\perp:=(f_1^\perp, f_2^\perp).
\]
 In particular, this includes edges interior to a face, in the case $f_1=f_2$, in which case edge $e^\perp$ is a loop between $f_1^\perp$ and itself. 
\item If $e=(i,j)\in E^\partial$, note that directly from the definition of the boundary complex, there exists a unique face, denoted $f_e\in F$, for which $e$ parametrises part of $\partial f$ with counterclockwise orientation. Then we set 
\[
    e^\perp:=(f^\perp_e, v_e).
\]
\item Finally, we define 
\[
    E^\perp:=\{e^\perp:\ e\in E^{int}\}\cup \{(f_e^\perp, v_e), (v_e, f_e^\perp):\ e\in E^\partial\}.
\]
\end{itemize}
\end{dfn}
\begin{figure}[h]
\hskip4cm{\def\svgwidth{200pt}
\begingroup%
  \makeatletter%
  \providecommand\color[2][]{%
    \errmessage{(Inkscape) Color is used for the text in Inkscape, but the package 'color.sty' is not loaded}%
    \renewcommand\color[2][]{}%
  }%
  \providecommand\transparent[1]{%
    \errmessage{(Inkscape) Transparency is used (non-zero) for the text in Inkscape, but the package 'transparent.sty' is not loaded}%
    \renewcommand\transparent[1]{}%
  }%
  \providecommand\rotatebox[2]{#2}%
  \newcommand*\fsize{\dimexpr\f@size pt\relax}%
  \newcommand*\lineheight[1]{\fontsize{\fsize}{#1\fsize}\selectfont}%
  \ifx\svgwidth\undefined%
    \setlength{\unitlength}{489.24186646bp}%
    \ifx\svgscale\undefined%
      \relax%
    \else%
      \setlength{\unitlength}{\unitlength * \real{\svgscale}}%
    \fi%
  \else%
    \setlength{\unitlength}{\svgwidth}%
  \fi%
  \global\let\svgwidth\undefined%
  \global\let\svgscale\undefined%
  \makeatother%
  \begin{picture}(1,0.72053332)%
    \lineheight{1}%
    \setlength\tabcolsep{0pt}%
    \put(0,0){\includegraphics[width=\unitlength,page=1]{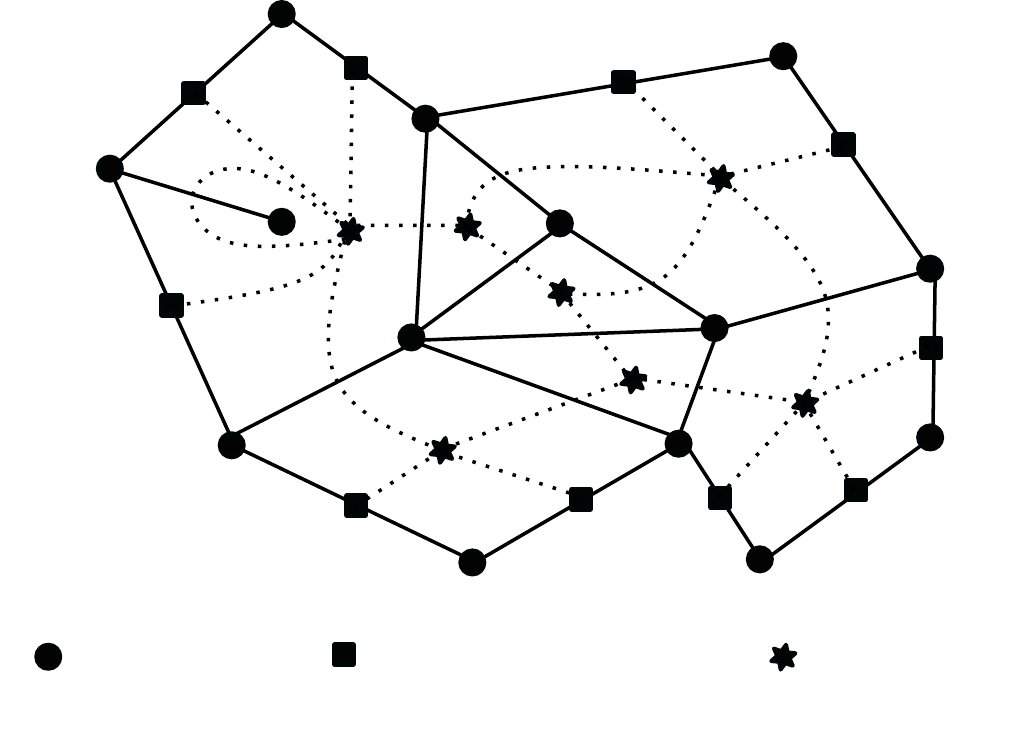}}%
    \put(0.09138426,0.07313038){\makebox(0,0)[lt]{\lineheight{1.25}\smash{\begin{tabular}[t]{l}$=V$\end{tabular}}}}%
    \put(0.3759283,0.07313038){\makebox(0,0)[lt]{\lineheight{1.25}\smash{\begin{tabular}[t]{l}$=V^{\perp,\partial}$\end{tabular}}}}%
    \put(0.80548315,0.07313038){\makebox(0,0)[lt]{\lineheight{1.25}\smash{\begin{tabular}[t]{l}$=V^{\perp, int}$\end{tabular}}}}%
    \put(0,0){\includegraphics[width=\unitlength,page=2]{admissible-dual.pdf}}%
    \put(0.08435327,0.00360291){\makebox(0,0)[lt]{\lineheight{1.25}\smash{\begin{tabular}[t]{l}$=E$\end{tabular}}}}%
    \put(0,0){\includegraphics[width=\unitlength,page=3]{admissible-dual.pdf}}%
    \put(0.37873939,0.00467716){\makebox(0,0)[lt]{\lineheight{1.25}\smash{\begin{tabular}[t]{l}$=E^\perp$\end{tabular}}}}%
  \end{picture}%
\endgroup%
}
\caption{An admissible planar graph and its dual graph.}
\label{admissible-dual}
\end{figure}  
\begin{remark}\label{rmk-dual-nc}
    Note that the dual graph of an admissible planar complex $G=(V,E,F)$ is not always connected. It is connected if and only if $G$ satisfies the property that two faces $f_1,f_2\in F$ share a vertex only if they share an edge.
\end{remark}
We next introduce notations relevant to the dual graph:
\begin{dfn} If $\alpha:E\to\mathbb R$ is a $1$-form on a graph $G=(V,E,F)$ and $G^\perp=(V^\perp, E^\perp)$ is the dual graph, we associate to it a \textbf{dual $1$-form} $\alpha^\perp:E^\perp\to\mathbb R$ defined by%
\begin{equation}\label{perpmap}
    \alpha^\perp(e^\perp):=\alpha(e)
\end{equation}
for all edges $e\in E$. Note that $(\pi\circ\alpha)^\perp=\pi\circ (\alpha^\perp)$, so we denote this form by $\pi\circ\alpha^\perp$.
\end{dfn}
\begin{dfn}
If $G=(V,E)$ is a bidirectional graph and $\gamma:E\to \mathbb R$ is a $1$-form, we define the {\bfseries divergence} $\mathsf{div}\, (\gamma):V\to\mathbb R$ as follows:
%
\[
\mathsf{div}\, (\gamma)(v):=\sum_{v'\in V: (v,v')\in E}\gamma(v,v').
\]
A $1$-form $\gamma:E\to\mathbb R$ has {\bfseries integral divergence} if $\mathsf{div}\, (\gamma)(v)\in\mathbb Z$ for all $v\in V^{int}$.
\end{dfn}
Observe that for the $\curl$ operation to be defined we need to include the faces $F$ in the planar complex $G$ (and these are also needed to determine the dual graph), while for the divergence operation (that we will use in the dual graph) vertices and edges are enough.

In order to translate hypotheses (h0), (h1) and (h2) in terms of the dual graph, we introduce the following.
\begin{dfn}
Given a $1$-form $\gamma$ and a choice of boundary vertices $V^\partial\subset V$, we define the {\bfseries total flux} through the boundary $V^\partial$ 
\[
\mathsf{Flux}(\gamma):=\sum_{v\in V^\partial}\mathsf{div}\,(\gamma)(v),
\]
and the {\bfseries boundary total variation}
\[
\|\gamma\|_{\partial}:=\sum_{v\in V^\partial}\sum_{x\sim v} |\gamma(v,x)|.
\]
\end{dfn}
With these definitions the discrete divergence theorem holds, i.e., there holds
\begin{equation}
    \label{eq-divergence-thm}
    \sum_{v\in (V)^{int}} \mathsf{div}\,(\gamma)(v)= -\mathsf{Flux}(\gamma).
\end{equation}
The following result is a direct consequence of the above definitions, and allows to compare hypotheses (h1),(h2) on a planar complex to their analogues on the dual graph. The easy proof is left to the reader.
\begin{lemma}\label{lem:dual} 
    Let $G=(V, E, F)$ be an admissible planar complex, and let $\alpha:E\to\mathbb R$ be a $1$-form on $G$. Let $G^\perp=(V^\perp, E^\perp)$ be the dual graph and $V^{\perp,\partial}$ be the corresponding boundary vertices. Then the following hold.
    \begin{enumerate}
        \item The mapping $\alpha\mapsto \alpha^\perp$ from $1$-forms over $(V,E)$ to their dual $1$-forms over $(V^\perp, E^\perp)$ is a bijection.
        \item For any face $f\in F$ there holds
        \begin{equation}\label{eq:divcurl}
        \mathsf{curl}\,( \alpha)(f)=\mathsf{div}\,(\alpha^\perp)(f^\perp).
        \end{equation}
        \item If $\alpha=du$ for some $u:V\to\mathbb R$ then (h1) holds for $u$ if and only if on $G^\perp$ with boundary vertices $V^{\perp,\partial}$ we have
        \begin{equation}\tag{$h1^\perp$}\label{eq:hyp1dual}
            \mathsf{Flux}(\pi\circ\alpha^\perp)=0\quad\textup{and}\quad \|\pi\circ\alpha^\perp\|_\partial \leq 1.
        \end{equation}
        \item If $\alpha=du$ for some $u:V\to\mathbb R$ then (h2) holds for $u$ if and only if on $G^\perp$ with boundary vertices $V^{\perp,\partial}$ we have
        \begin{equation}\tag{$h2^\perp$}\label{eq:hyp2dual}
            \mathsf{Flux}(\pi\circ \alpha^\perp) \in \{\pm 1\}\quad\textup{and}\quad \|\pi\circ\alpha^\perp\|_\partial = 1.
        \end{equation}
    \end{enumerate}
\end{lemma}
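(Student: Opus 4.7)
The four statements are essentially bookkeeping consequences of Definition \ref{def:dualgraph}, so the plan is to unpack the definitions and check each claim in turn, paying careful attention to orientation signs.

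Part (1) is immediate: the map $e \mapsto e^\perp$ is a bijection from $E$ to $E^\perp$ by construction, and the orientation conventions (the CCW rotation rule for interior edges, the canonical orientation for boundary edges) give $(-e)^\perp = -(e^\perp)$. Hence assigning $\alpha^\perp(e^\perp) := \alpha(e)$ produces a well-defined antisymmetric $1$-form on $G^\perp$, with an equally explicit inverse.

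For Part (2), I would fix $f \in F$ and classify each edge $e \in \partial f$ according to whether $e \in E^{int}$ or $e \in E^\partial$. Since $\partial f$ is oriented counterclockwise, $f$ sits on the left side of each $e \in \partial f$; the rotation convention then identifies $f$ with the distinguished left face appearing at the endpoint of $e^\perp$ in the interior case, and with $f_e$ (with $e^\perp$ starting at $f^\perp$) in the boundary case. Tracking these two conventions and the antisymmetry of $\alpha^\perp$, one verifies that each $e \in \partial f$ contributes $\alpha(e)$ to the outgoing sum defining $\mathsf{div}\,(\alpha^\perp)(f^\perp)$, which proves \eqref{eq:divcurl}.

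Parts (3) and (4) then follow by a direct computation at boundary dual vertices. Each $v_e \in V^{\perp,\partial}$ is incident only to the pair $e^\perp$ and $-e^\perp$ connecting it to $f_e^\perp$, so
\[
\mathsf{div}\,(\alpha^\perp)(v_e) = \pm \alpha(e),\qquad \sum_{x\sim v_e} |\alpha^\perp(v_e, x)| = |\alpha(e)|,
\]
with a uniform sign in the first identity determined by Definition \ref{def:dualgraph}. Summing over $e \in E^\partial$ yields $|\mathsf{Flux}(\alpha^\perp)| = \left|\sum_{e\in E^\partial}\alpha(e)\right|$ and $\|\alpha^\perp\|_\partial = \sum_{e\in E^\partial}|\alpha(e)|$; specialising to $\alpha = \pi\circ du$ then turns hypotheses (h1), (h2) into \eqref{eq:hyp1dual}, \eqref{eq:hyp2dual}, respectively. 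The only genuinely delicate point is the sign bookkeeping in Part (2), where interior and boundary edges use slightly different orientation rules; once that is handled consistently, the remaining parts are automatic.
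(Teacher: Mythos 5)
Your route is the intended one (the paper explicitly leaves this verification to the reader as a direct consequence of Definition~\ref{def:dualgraph}), and parts (1), (3), (4) are fine as you argue them: $e\mapsto e^\perp$ is an orientation-compatible bijection of edges, each boundary dual vertex $v_e$ has the single neighbour $f_e^\perp$, so $\|\pi\circ\alpha^\perp\|_\partial=\sum_{e\in E^\partial}|\pi(du(e))|$ and $|\mathsf{Flux}(\pi\circ\alpha^\perp)|=\bigl|\sum_{e\in E^\partial}\pi(du(e))\bigr|$, and (h1), (h2) together with their dual versions only depend on these quantities, hence are insensitive to the overall sign you leave unspecified.

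The gap is in part (2), precisely at the step you yourself single out as delicate, and as written your bookkeeping is internally inconsistent. You say that for an interior edge $e\in\partial f$ (counterclockwise, so $f$ lies to the left of $e$) the face $f^\perp$ is the \emph{endpoint} of $e^\perp$, and then conclude that $e$ contributes $+\alpha(e)$ to the outgoing sum $\mathsf{div}(\alpha^\perp)(f^\perp)=\sum_{w\sim f^\perp}\alpha^\perp(f^\perp,w)$. These two statements are incompatible: if $e^\perp=(f'^\perp,f^\perp)$ ends at $f^\perp$, then the outgoing dual edge attached to the pair $\{e,-e\}$ is $(-e)^\perp=(f^\perp,f'^\perp)$, whose value is $\alpha(-e)=-\alpha(e)$, so the contribution is $-\alpha(e)$; boundary edges, via $e^\perp=(f_e^\perp,v_e)$, contribute $+\alpha(e)$, and with these mixed signs \eqref{eq:divcurl} would fail on any face having both interior and boundary edges (try two unit squares sharing one edge). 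Your "endpoint" reading is in fact what the counterclockwise-rotation wording of Definition~\ref{def:dualgraph} literally gives; for \eqref{eq:divcurl} to hold one must read the duality so that for $e\in\partial f$ the dual edge $e^\perp$ \emph{leaves} $f^\perp$, i.e.\ the normal $\nu_{12}$ is the clockwise rotation $(x_1,x_2)\mapsto(x_2,-x_1)$ of $\dot e$ (the same $\perp$ used for $\partial^+_\eps\Omega$ in Section 2), which is also the only choice consistent with the stated rule for boundary edges. You should carry out this one sign check explicitly, state the orientation convention you adopt, and note that parts (3)--(4) are unaffected either way.
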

\section{On $1$-forms with integral divergence}\label{sec-1fid}
We next extend the results of \cite[Sec. 2.2]{petrache2015singular} on $1$-forms with integral divergence to a more general context. Note that \emph{the results of this section hold for general, possibly non-planar graphs, using only their combinatorial structure}.
\subsection{Notations and statements of the results}
We consider as before a connected bidirectional graph $G=(V,E)$, with selected boundary vertices $V^\partial\subset V$.
%
%
%
%
The following are the main results of this section. They are reminiscent of \cite[Prop. 2.5]{petrache2015singular}.
\begin{thm}\label{thm:removedipoles}
Consider a bidirectional graph $G=(V, E)$ with boundary vertices $V^\partial\subset V$.
Let $\alpha:E\to\mathbb R$ be a $1$-form with integral divergence and assume furthermore that
\[
\mathsf{Flux}(\alpha)=0,\quad \|\alpha\|_{\partial}\leq 1.
\]
Then, there exists a $1$-form $\gamma$ such that 
\begin{itemize}
\item $|\gamma|\leq |\alpha|$ for all edges in $E$ and $\gamma=\alpha$ over edges from $V^\partial\times V$,
\item $\mathsf{div}\,(\gamma)(v)=0$ for all $v\in V\setminus V^\partial$.
\end{itemize}

Furthermore, if $\|\alpha\|_\partial <1$, and $\mathsf{div}(\alpha)\neq 0$ on $V\setminus V^\partial$, then $\gamma$ can be chosen so that there exists an edge $e\in E$ at which $|\gamma(e)| < |\alpha(e)|$.
\end{thm}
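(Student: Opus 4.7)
The plan is to substitute $\beta:=\alpha-\gamma$ and recast the construction of $\gamma$ as an $s$-$t$ max-flow problem for $\beta$. The three required properties of $\gamma$ translate into: (a) $\beta\equiv 0$ on every edge with an endpoint in $V^\partial$; (b) $\mathsf{div}(\beta)=\mathsf{div}(\alpha)$ on $V\setminus V^\partial$; (c) on each edge $e$, $\beta(e)$ has the same sign as $\alpha(e)$ and $|\beta(e)|\le 2|\alpha(e)|$, the last condition being equivalent to the desired $|\alpha(e)-\beta(e)|\le|\alpha(e)|$. The natural flow network on $(V\setminus V^\partial)\cup\{s,t\}$ then has: an edge from a super-source $s$ of capacity $\mathsf{div}(\alpha)(v)$ to each $v\in V^+:=\{v:\mathsf{div}(\alpha)(v)>0\}$; an edge of capacity $-\mathsf{div}(\alpha)(v)$ from each $v\in V^-:=\{v:\mathsf{div}(\alpha)(v)<0\}$ to a super-sink $t$; and for each $(i,j)\in E$ with $i,j\in V\setminus V^\partial$ and $\alpha(i,j)>0$, a directed edge $(i,j)$ of capacity $2\alpha(i,j)$. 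The discrete divergence theorem together with $\mathsf{Flux}(\alpha)=0$ equates total supply and demand to $N:=\sum_{V^+}\mathsf{div}(\alpha)(v)\in\mathbb{Z}_{\ge 0}$, and a max-flow of value $N$ yields the desired $\beta$ (extended by antisymmetry to a $1$-form on $E$).

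By max-flow/min-cut, the existence of such a flow is equivalent to the combinatorial inequality
\begin{equation*}
    2\!\!\sum_{\substack{(i,j):\, i\in P,\, j\in V^{int}\setminus P\\ \alpha(i,j)>0}}\!\! \alpha(i,j) \;\ge\; N_P:=\sum_{v\in P}\mathsf{div}(\alpha)(v) \qquad \text{for every } P\subseteq V\setminus V^\partial,
\end{equation*}
and the main step of the proof is to verify this. The discrete divergence theorem applied to $P$ decomposes $N_P=F_{int}+F_\partial$, where $F_{int}$ (resp.\ $F_\partial$) is the net $\alpha$-flux from $P$ into its interior (resp.\ boundary) complement. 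The hypothesis $\mathsf{Flux}(\alpha)=0$ gives $F_\partial=-F_\partial'$ (with $F_\partial'$ the analogous sum for the complement), and the triangle inequality yields $2|F_\partial|\le|F_\partial|+|F_\partial'|\le\|\alpha\|_\partial\le 1$, i.e., $|F_\partial|\le 1/2$. Therefore $F_{int}\ge N_P-1/2$, and since trivially $F_{int}\le\sum\alpha(i,j)_+$ over the same edges, one obtains $2\sum\alpha_+\ge 2N_P-1$. The \emph{integrality} of $\mathsf{div}(\alpha)$ now finishes the job: $N_P\in\mathbb{Z}$, so $2N_P-1\ge N_P$ whenever $N_P\ge 1$, while the cases $N_P\le 0$ are trivial.

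For the strict ``furthermore'' statement I would repeat the argument using reduced capacities $(2-\epsilon)\alpha(i,j)$ on the interior edges. Under $\|\alpha\|_\partial<1$ the refined bound $F_{int}\ge N_P-\|\alpha\|_\partial/2$ holds, and a direct algebraic check shows that the min-cut condition $(2-\epsilon)\sum\alpha_+\ge N_P$ still holds for all $P$ provided $\epsilon\in\bigl(0,\,(1-\|\alpha\|_\partial)/(1-\|\alpha\|_\partial/2)\bigr)$. Since $\mathsf{div}(\alpha)\not\equiv 0$ on $V\setminus V^\partial$ forces $N\ge 1$, any feasible flow $\beta$ in this reduced network is nonzero; on any edge $e^*$ with $\beta(e^*)>0$ one has $\beta(e^*)\le(2-\epsilon)|\alpha(e^*)|<2|\alpha(e^*)|$, which translates into $|\gamma(e^*)|<|\alpha(e^*)|$.

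The main obstacle is the combinatorial min-cut inequality: its proof exploits in a tight way three ingredients simultaneously---the integrality of $\mathsf{div}(\alpha)$ (which promotes the seemingly-lossy bound $\sum\alpha_+\ge N_P-1/2$ into the required $\sum\alpha_+\ge N_P/2$); the zero-flux condition (which splits the boundary variation evenly between the two sides of a cut); and $\|\alpha\|_\partial\le 1$ (which exactly matches the factor-of-two slack inherent in condition (c)). The tight balance of these three ingredients is consistent with the sharpness of hypotheses noted in Remark~\ref{rem-sharp-hyp}.
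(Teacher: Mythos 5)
Your proposal is correct, and although it rests on the same two pillars as the paper's argument (max-flow min-cut plus integrality of $\mathsf{div}(\alpha)$), it runs them in a genuinely different direction. The paper first normalizes the setting (connected components, no boundary-boundary edges, constant sign at each boundary vertex, via Lemmas \ref{lem:connectedhyp} and \ref{lem:auxhyp}), then maximizes a flow from $V^\partial_+$ to $V^\partial_-$ with capacity $|\alpha|$ and takes $\gamma$ to be the flow $1$-form itself; the crux there is that the divergence theorem applied to the source side of a \emph{minimum cut} traps an integer in an interval of the form $[\xi_0-\xi^\star,\xi_0+\xi^\star]\subseteq[0,1]$, forcing the max-flow value to equal $\tfrac12\|\alpha\|_\partial$ and hence the flow to saturate all boundary edges. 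You instead build the correction $\beta=\alpha-\gamma$ as a transshipment from the interior positive charges to the interior negative ones, with capacities $2|\alpha|$ on interior edges only, and verify the Gale--Hoffman feasibility inequality $2\Sigma_+(P)\ge N_P$ for \emph{every} interior set $P$, using the divergence theorem on $P$ together with $\mathsf{Flux}(\alpha)=0$ to get $|F_\partial|\le\tfrac12\|\alpha\|_\partial\le\tfrac12$, and integrality of $N_P$ to upgrade $2\Sigma_+\ge 2N_P-1$ to $2\Sigma_+\ge N_P$; I checked the cut bookkeeping (translation of $s$--$t$ cuts into subsets $P$), the equivalence $|\alpha-\beta|\le|\alpha|\Leftrightarrow\beta\in[0,2\alpha]$, and your admissible range $\epsilon<(1-\|\alpha\|_\partial)/(1-\|\alpha\|_\partial/2)$ for the reduced capacities, and all are sound, including the observation that a flow of value $N\ge1$ must push positive flow through at least one interior edge, which yields the strict inequality. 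What your route buys: no preliminary reductions are needed (connectivity and the (R1)--(R2) normalizations are irrelevant to the cut condition), all cuts are handled uniformly rather than analyzing one optimal cut, and the strict-inequality refinement falls out of the capacity-shrinking trick, whereas the paper proves it by a separate counting argument at a charged vertex ($\sum_{x\sim v}|\gamma_{\Phi^\star}(v,x)|\le 2T(\Phi^\star)=\|\alpha\|_\partial<1$). What the paper's route buys: the optimal flow form is the desired $\gamma$ directly, with no sign constraints or correction-form bookkeeping, and its normalized setup and single-cut analysis are reused almost verbatim in the inductive proof of Theorem \ref{thm:removedipoles1} and in Theorem \ref{thm:removedipoles2}, which is why the paper organizes the argument that way.
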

\begin{thm}\label{thm:removedipoles1}
Consider a bidirectional graph $G=(V, E)$ with boundary vertices $V^\partial\subset V$.
Let $\alpha:E\to\mathbb R$ be a $1$-form with integral divergence and assume furthermore that
\[
\mathsf{Flux}(\alpha)
\in \{\pm1\},\quad \|\alpha\|_{\partial}=1.
\]
Then, there exist a $1$-form $\gamma$ and a vertex $x_0\in V\setminus V^\partial$ such that
\begin{itemize}
\item $|\gamma|\leq |\alpha|$ and $\gamma=\alpha$ over edges from $V^\partial\times V$,
\item $\mathsf{div}\, (\gamma)(x_0)=-\mathsf{Flux}(\alpha)$ and $\mathsf{div}\,(\gamma)(v)=0$ for all $v\in V\setminus (V^\partial\cup \{x_0\})$.
\end{itemize}
\end{thm}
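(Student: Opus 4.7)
The plan is to construct $\gamma=\alpha-\delta$ where $\delta$ is a carefully chosen $1$-form on the interior subgraph, obtained via max-flow/min-cut on an auxiliary source-sink network; the singular vertex $x_0$ is pinpointed by an integer-concentration argument.

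\emph{Normalization.} Replacing $\alpha$ by $-\alpha$ if necessary, we may assume $\mathsf{Flux}(\alpha)=-1$. The saturation $\|\alpha\|_\partial=-\mathsf{Flux}(\alpha)=1$ forces, via a same-sign argument on signed versus absolute sums, $\alpha(v,x)\leq 0$ on every edge $(v,x)$ with $v\in V^\partial$ and $\alpha\equiv 0$ on edges internal to $V^\partial$. Set $V^{\mathrm{int}}:=V\setminus V^\partial$, let $E^{\mathrm{int}}$ be the edges with both endpoints in $V^{\mathrm{int}}$, and put $F_v:=\sum_{x\in V^\partial,\,x\sim v}\alpha(x,v)\leq 0$; by the divergence theorem $\sum_v F_v=-1$ and the interior divergences $n_v:=\mathsf{div}(\alpha)(v)\in\mathbb{Z}$ sum to $+1$.

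\emph{Reformulation.} Writing $\delta:=\alpha-\gamma$, the conclusions of the theorem are equivalent to: $\delta\equiv 0$ on edges incident to $V^\partial$; on each interior edge $e$, $\delta(e)$ shares the sign of $\alpha(e)$ with $|\delta(e)|\leq 2|\alpha(e)|$; and $\mathsf{div}(\delta)(v)=n_v-\mathbf{1}_{v=x_0}$ for $v\in V^{\mathrm{int}}$ (integer demands summing to zero). We thus seek $\delta$ as a directed flow on $(V^{\mathrm{int}},E^{\mathrm{int}})$ with edges oriented along the sign of $\alpha$ and capacity $2|\alpha(e)|$, realizing these demands for some $x_0\in V^{\mathrm{int}}$.

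\emph{Max-flow selection of $x_0$.} Form the auxiliary $s$-$t$ network on $V^{\mathrm{int}}$ with $s$ attached to each $v$ with $n_v>0$ by an integer-capacity edge $n_v^+$, and $t$ attached to each $v$ with $n_v<0$ by an integer-capacity edge $n_v^-$. Since $\alpha|_{E^{\mathrm{int}}}$ itself realizes the divergences $n_v+F_v$ at interior vertices within the tighter capacities $|\alpha(e)|$, a direct min-cut check using $F_{S}\geq -1$ for every $S\subseteq V^{\mathrm{int}}$ shows that the max-flow in the auxiliary network equals $K:=\sum_v n_v^-=\bigl(\sum_v n_v^+\bigr)-1$, saturating all sinks and leaving exactly one unit of source supply unused. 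Because the $s$-edge capacities are integer and $K$ is integer, an augmenting-cycle rearrangement on the $s$-edges of any optimal max-flow transfers fractional leftover between positive-divergence vertices to consolidate the unit of leftover supply onto a single source edge $(s,x_0)$; the resulting integer $s$-edge allocation gives the required $\delta$ with $\mathsf{div}(\delta)(v)=n_v-\mathbf{1}_{v=x_0}$.

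\emph{Main obstacle.} The crux of the argument is the integer concentration of the unit leftover at a single vertex $x_0$: although the interior capacities $2|\alpha(e)|$ are real-valued, the integrality of the $s$-edge capacities permits the cycle-augmentation argument to concentrate the leftover onto a single $s$-edge, thereby identifying $x_0$ as the unique interior singularity. Setting $\gamma:=\alpha-\delta$ then yields $\gamma=\alpha$ on boundary edges, $|\gamma|\leq|\alpha|$ from the sign-and-capacity constraints on $\delta$, and $\mathsf{div}(\gamma)(x_0)=1=-\mathsf{Flux}(\alpha)$ with $\mathsf{div}(\gamma)(v)=0$ for $v\in V^{\mathrm{int}}\setminus\{x_0\}$, completing the proof.
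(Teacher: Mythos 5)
There is a genuine gap, and it sits exactly at the step you yourself flag as the crux: the consolidation of the one unit of leftover supply onto a single source edge $(s,x_0)$. Integrality of the $s$-edge capacities and of $K$ does not give this. To transfer slack from $(s,u)$ to $(s,w)$ you need an augmenting path from $w$ to $u$ in the residual network, and such residual connectivity is in general absent; ``augmenting-cycle rearrangement on the $s$-edges'' is not an operation that is always available. Concretely, take the second example of Remark \ref{r:optimal} (negated so that $\mathsf{Flux}=-1$): the interior vertices $A_1,A_2$ have divergence $+1$, $B$ has divergence $-1$, the interior edges $(A_i,B)$ carry capacity $2|\alpha(A_i,B)|=1-2\eps$, and the auxiliary max-flow indeed equals $K=1$ (send $1/2$ through each $A_i$). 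But no flow of value $1$ can concentrate the unit of slack on a single $s$-edge, since that would force one edge $(A_i,B)$ to carry a full unit, exceeding its capacity $1-2\eps$. So the consolidation claim is false as a general fact about networks with integer source capacities, and your justification for it would apply verbatim to this configuration, where the conclusion of the theorem itself fails.

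This points to the structural problem: your argument never genuinely uses $\|\alpha\|_\partial=1$ beyond the sign normalization. The min-cut verification you invoke (via ``$F_S\geq -1$'') is actually automatic — writing $\sum_{v\in T}\mathsf{div}(\alpha)(v)$ as boundary outflow plus the signed interior crossing terms shows every cut has capacity at least $K$ regardless of the boundary variation — so the only place the hypothesis could matter is the concentration step, where you supply no mechanism for it. Since the statement is false for $\|\alpha\|_\partial>1$, any correct proof must make that hypothesis bite precisely there. The paper does this by induction on the number of positive-divergence vertices (after the reductions of Lemma \ref{lem:auxhyp}): run max-flow from $V^\partial$ to a chosen $x_0$ with capacity $|\alpha|$; if the value is $1$ the flow form already works, and otherwise the min cut has capacity strictly below $1$, so by integrality the restriction of $\alpha$ to the piece cut off around $x_0$ has flux $0$, Theorem \ref{thm:removedipoles} cancels the charge at $x_0$ against charges inside that piece without touching the cut, and the induction proceeds. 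To repair your approach you would need an argument of comparable strength showing why, under $\|\alpha\|_\partial=1$, some max flow with concentrated slack exists — which is essentially the theorem itself, not a consequence of integrality.
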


\begin{remark}\label{r:specifica ipotesi}
Under the hypotheses of Theorem \ref{thm:removedipoles1} we have that
\[
1=\left\vert \mathsf{Flux}(\alpha)\right\vert =
\left\vert \sum_{v\in V^\partial}\sum_{x\sim v} \alpha(v,x)\right\vert \leq
\sum_{v\in V^\partial}\sum_{x\sim v} |\alpha(v,x)| = \|\alpha\|_{\partial}=1,
\]
and therefore the sign of $\alpha(v, x)$ with $v\in V^\partial$ and $x\sim v$ is constant, i.e.,
\[
\alpha(v, x) = \mathsf{Flux}(\alpha) |\alpha(v,x)| \quad \forall\; v\in V^\partial \; \forall\; x\sim v.
\]
In particular, for every $v_1, v_2\in V^\partial$ with $v_1\sim v_2$ we have that $\alpha(v_1,v_2)=0$.
\end{remark}



\begin{remark}\label{r:optimal}
The previous results are optimal for what concerns the hypotheses on the total flux and the total boundary variation of the forms, in the sense illustrated in the following examples. 

\medskip

\noindent \textit{Example of the optimatlity of Theorem \ref{thm:removedipoles}}.
Consider the graph with vertices $V:=\{a,b,A,B\}$, with $V^\partial:=\{a,b\}$ and $a\sim A, A\sim B, B\sim b$. Consider a $1$-form $\alpha:E\to \mathbb R$ such that 
    \[
        \alpha(a,A)=-\alpha(A,a)=\alpha(B,b)=-\alpha(b,B)=\frac12+\epsilon,\quad -\alpha(A,B)=\alpha(B,A)=\frac12 -\epsilon.
    \]
    Then $\mathsf{div}\,(\alpha)(A)=-1$ and $\mathsf{div}\,(\alpha)(B)=1$. We have thus
    \[
        \mathsf{Flux}(\alpha)=0,\quad \|\alpha\|_{\partial}=1+2\epsilon.
    \]
It is easy to check that if $|\gamma|\leq |\alpha|$ is a $1$-form with integral fluxes that coincides with $\alpha$ over $(a,A), (b,B)$ must actually be equal to $\alpha$. This shows that the hypothesis of Theorem \ref{thm:removedipoles} on $\|\alpha\|_{\partial}$ is sharp.

\medskip

\begin{figure}[htbp]
\centering
{\def\svgwidth{300pt}
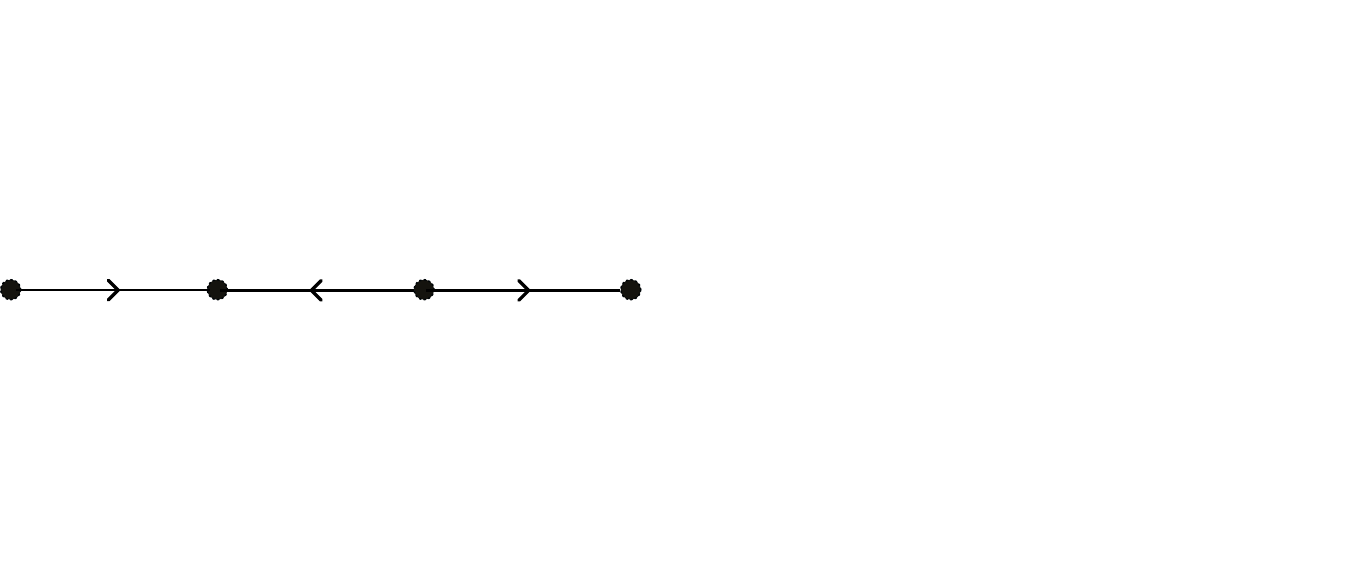}
\label{fig-examples1}
\caption{Examples illustration that the assumptions of Theorem \ref{thm:removedipoles} are Theorem \ref{thm:removedipoles1} are sharp.}
\end{figure}
\noindent \textit{Example of the optimality of Theorem \ref{thm:removedipoles1}}.
Consider the graph with vertex set $V:=\{a_1,a_2,A_1,A_2,B,b\}$ with $V^\partial:=\{a_1,a_2,b\}$ and $a_i\sim A_i, A_i\sim B, B\sim b$ (for $i=1,2$). We consider the $1$-form $\alpha:E\to\mathbb R$ determined by the following (again for $i=1,2$):
    \[
        \alpha(a_i,A_i)=\frac12+\epsilon,\quad \alpha(A_i,B)=-\frac12+\epsilon,\quad \alpha(B,b)=2\epsilon.
    \]
    Then $\mathsf{div}\,(\alpha)(A_i)=-1, \mathsf{div}\,(\alpha)(B)=+1$ and we have
    \[
        \mathsf{Flux}(\alpha)=+1,\quad \|\alpha\|_{\partial}=1+4\epsilon.
    \]
It is easy to check that if $|\gamma|\leq |\alpha|$ is a $1$-form with integral fluxes that coincides with $\alpha$ over boundary edges $(a_i,A_i), (B,b)$, then the values of $\gamma$ over the remaining edges are uniquely determined and we have $\gamma=\alpha$. This shows that the hypothesis of Theorem \ref{thm:removedipoles1} on $\|\alpha\|_{\partial}$ is sharp.

\noindent \textit{Example for Theorem \ref{thm:removedipoles1}} In this example we show the most basic situation in which it is not possible to remove a dipole without modifying our methods. Consider the graph with $V:=\{a_1,a_2, a_3, A_1, A_2, A_3, B\}$ and $a_i\sim A_i, A_i\sim B$ for $1\le i\le 3$, and with $V^\partial:=\{a_1,a_2,a_3\}$. We define the $1$-form $\alpha:E\to \mathbb R$ by 
\[
    \alpha(a_i, A_i)=\frac23, \quad \alpha(B,A_i)=\frac13,\quad\text{for }1\le i\le 3.
\]

\begin{figure}[h]
\centerline{{\def\svgwidth{200pt}
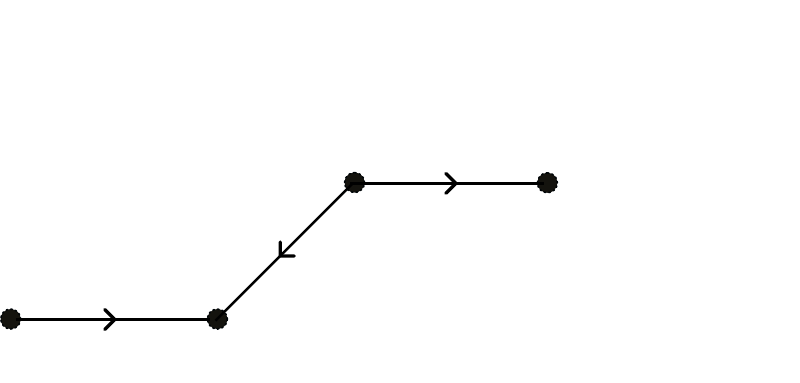}}
\caption{Example illustrating that in some cases it is not possible to remove the dipoles.}
\label{fig-examples2}
\end{figure}
Then $\mathsf{Flux}(\alpha)=\|\alpha\|_\partial=2$, and it is easy to check that if $|\gamma|\leq |\alpha|$ is a $1$-form with integral fluxes that coincides with $\alpha$ over $(a_i,A_i), 1\le i\le 3$, then we must have $\gamma=\alpha$. In particular, the statement of Theorem \ref{thm:removedipoles1} does not extend in a simple way to the case of $|\mathrm{Flux}(\alpha)|>1$. The observation of this example is the main motivation for open question \ref{q:bestupperbound}. 
\end{remark}

\subsection{Reduction to simpler graphs}\label{sec:auxhyp}
In view of the proof of Theorems \ref{thm:removedipoles} and \ref{thm:removedipoles1}, we show in this section that we can reduce to the case in which a few simplifying hypotheses hold for the graphs $(V,E)$. We start by reducing the proof to the case of connected graphs.
\begin{lemma}\label{lem:connectedhyp}
    The following reductions hold:
    \begin{enumerate}
        \item If Theorem \ref{thm:removedipoles} holds for connected bidirectional graphs $G=(V,E)$ with boundary vertices $V^\partial$, then its statement follows for all bidirectional graphs with boundary.
        \item If Theorems \ref{thm:removedipoles} and \ref{thm:removedipoles1} hold for connected bidirectional graphs with boundary, then Theorem \ref{thm:removedipoles1} holds for bidirectional graphs with boundary.
    \end{enumerate}
\end{lemma}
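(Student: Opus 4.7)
The plan is to decompose the graph $G=(V,E)$ into its connected components $G_1=(V_1,E_1),\ldots,G_k=(V_k,E_k)$, set $V^\partial_i := V^\partial\cap V_i$, $V^{int}_i:=V_i\setminus V^\partial_i$, and apply the connected versions of Theorems \ref{thm:removedipoles} and \ref{thm:removedipoles1} to each $G_i$ with the restricted $1$-form $\alpha_i:=\alpha|_{E_i}$, then glue the resulting forms $\gamma_i$ into a single $\gamma$ on $E$. The key quantitative observation is that, since $\alpha$ has integral divergence, the discrete divergence theorem \eqref{eq-divergence-thm} applied on each component gives $\mathsf{Flux}(\alpha_i)=-\sum_{v\in V^{int}_i}\mathsf{div}(\alpha)(v)\in\mathbb Z$, while $|\mathsf{Flux}(\alpha_i)|\leq \|\alpha_i\|_\partial$ and $\sum_i \|\alpha_i\|_\partial=\|\alpha\|_\partial$, $\sum_i \mathsf{Flux}(\alpha_i)=\mathsf{Flux}(\alpha)$.

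For part (1), under the hypotheses of Theorem \ref{thm:removedipoles} we have $\sum_i \mathsf{Flux}(\alpha_i)=0$ and $\sum_i \|\alpha_i\|_\partial\leq 1$. The integrality of each $\mathsf{Flux}(\alpha_i)$ combined with $|\mathsf{Flux}(\alpha_i)|\leq \|\alpha_i\|_\partial$ and $\sum_i \|\alpha_i\|_\partial\leq 1$ forces $\mathsf{Flux}(\alpha_i)=0$ on every component: indeed, if some $|\mathsf{Flux}(\alpha_{i_0})|\geq 1$, then $\|\alpha_{i_0}\|_\partial\geq 1$, leaving $\sum_{i\neq i_0}\|\alpha_i\|_\partial=0$ and $\sum_{i\neq i_0}\mathsf{Flux}(\alpha_i)=\mp 1$, contradicting the bound $|\mathsf{Flux}(\alpha_i)|\leq \|\alpha_i\|_\partial=0$ on the remaining components. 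Hence $\alpha_i$ satisfies the hypotheses of Theorem \ref{thm:removedipoles} on the connected graph $G_i$, and the gluing $\gamma(e):=\gamma_i(e)$ for $e\in E_i$ produces the desired $\gamma$. For the furthermore clause, the assumption $\mathsf{div}(\alpha)\not\equiv 0$ on $V\setminus V^\partial$ means $\mathsf{div}(\alpha_{i_\ast})\not\equiv 0$ on $V^{int}_{i_\ast}$ for at least one $i_\ast$; if moreover $\|\alpha\|_\partial<1$ then $\|\alpha_{i_\ast}\|_\partial<1$ as well, so the furthermore statement in the connected case applies to $G_{i_\ast}$ and yields an edge of strict inequality.

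For part (2), under the hypotheses of Theorem \ref{thm:removedipoles1} (say $\mathsf{Flux}(\alpha)=+1$), the chain of inequalities
\[
1=\mathsf{Flux}(\alpha)=\sum_i \mathsf{Flux}(\alpha_i)\leq \sum_i |\mathsf{Flux}(\alpha_i)|\leq \sum_i \|\alpha_i\|_\partial=1
\]
must be an equality throughout. Combined with $\mathsf{Flux}(\alpha_i)\in\mathbb Z$, this pins down exactly one distinguished component $G_{i_0}$ with $\mathsf{Flux}(\alpha_{i_0})=+1=\|\alpha_{i_0}\|_\partial$, while every other component satisfies $\mathsf{Flux}(\alpha_i)=0$ and $\|\alpha_i\|_\partial=0$. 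I apply the connected-case Theorem \ref{thm:removedipoles1} on $G_{i_0}$ to obtain $\gamma_{i_0}$ and the singular vertex $x_0\in V^{int}_{i_0}\subset V\setminus V^\partial$, and the connected-case Theorem \ref{thm:removedipoles} (whose hypotheses are trivially met, since $\|\alpha_i\|_\partial=0$) on each of the other components. Gluing the $\gamma_i$'s gives the claimed $\gamma$ with a single interior singularity at $x_0$.

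The only slightly delicate step is ruling out that more than one connected component carries non-zero integer flux; once the arithmetic bound above is in place this is automatic, and the rest is bookkeeping. No new definitions or macros are required, and the gluing preserves the pointwise bound $|\gamma|\leq|\alpha|$, the coincidence with $\alpha$ on $V^\partial\times V$, and the interior divergence conditions component by component.
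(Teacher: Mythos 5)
Your proof is correct and follows essentially the same route as the paper: decompose into connected components, use the integrality of each component's flux (via the discrete divergence theorem) together with the bound $|\mathsf{Flux}(\alpha_i)|\leq\|\alpha_i\|_\partial$ and the total boundary variation constraint to show that each component satisfies the hypotheses of Theorem \ref{thm:removedipoles}, respectively that exactly one component carries the unit flux for Theorem \ref{thm:removedipoles1}, then apply the connected-case theorems and glue. The paper phrases the reduction as an iterated two-block splitting rather than passing directly to all connected components, and your explicit treatment of the \emph{furthermore} (strict inequality) clause is a small but welcome addition that the paper leaves implicit.
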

\begin{proof}
    Assume first that $G=(V,E)$ is a bidirectional graph with boundary vertices $V^\partial$, which can be decomposed into disjoint nonempty graphs $G_1=(V_1, E_1), G_2=(V_2, E_2)$ with boundaries $V^\partial_1, V^\partial_2$ respectively, such that $V^\partial = V^\partial_1\cup V^\partial_2$. 
    
    The hypotheses of Theorem \ref{thm:removedipoles} for $G$ imply for the restrictions of $\alpha$ to $E_1, E_2$, there holds $\mathsf{Flux}(\alpha|_{E_1})+\mathsf{Flux}(\alpha|_{E_2})=0$ and $\|\alpha|_{E_1}\|_\partial + \|\alpha|_{E_2}\|_\partial \leq 1$. Note that due to \eqref{eq-divergence-thm} and to the condition that $\alpha$ has integer divergence, $\mathsf{Flux}(\alpha|_{E_j})\in \mathbb Z$ and from the definitions it follows that $\mathsf{Flux}(\alpha|_{E_j})\leq \|\alpha|_{E_j}\|_\partial \leq 1$. These two facts imply that $\mathsf{Flux}(\alpha|_{E_j})=0$ and thus the hypotheses of Theorem \ref{thm:removedipoles} hold separately for $G_1, G_2$. If $G_1,G_2$ are not both connected, we can repeat the previous reasoning iteratively, until we reduce from $G$ to its connected components. Then if the thesis of Theorem \ref{thm:removedipoles} holds for each connected component of $G$, it directly follows for $G$ itself, concluding the proof of item 1.

    Now assume that $G, G_1, G_2$ are as above, and that the hypotheses of Theorem \ref{thm:removedipoles1} hold for a $G$. By a reduction analogous to the one for item 1, we now find that, up to interchanging the roles of $G_1, G_2$, we can assume that $\mathsf{Flux}(\alpha|_{E_1})=\|\alpha|_{E_1}\|_\partial=0$ and $\mathsf{Flux}(\alpha|_{E_2})=\mathsf{Flux}(\alpha)\in\{\pm 1\}, \|\alpha|_{E_2}\|_\partial =1$. If $G_1, G_2$ are not both connected, we can repeat the reduction iteratively until we reduce to the connected components of $G$. For all but one such components the hypotheses of Theorem \ref{thm:removedipoles} hold, and for the remaining one component, the hypotheses of Theorem \ref{thm:removedipoles1} hold. Then it is straightforward to verify that the conclusions for these two situations allow to build a global $1$-form $\gamma$ over $G$ satisfying the thesis of Theorem \ref{thm:removedipoles1}.
\end{proof}
As allowed by the reduction from Lemma \ref{lem:connectedhyp}, from now on we will assume that the considered graphs are connected. We next reduce to graphs with simpler local structure.
\begin{lemma}\label{lem:auxhyp}
Consider a connected bidirectional graph $G=(V, E)$, let $V^\partial\subset V$ denote the boundary points and let $\alpha:E\to\mathbb R$ be a $1$-form satisfying either the hypotheses of Theorem \ref{thm:removedipoles} or those of Theorem \ref{thm:removedipoles1}. We can then construct a new connected bidirectional graph $\overline{G}=(\overline V,\overline E)$, with boundary vertices $\overline V^\partial$ and a new $1$-form $\overline \alpha:\overline E\to\mathbb R$ such that the same hypotheses of the Theorems \ref{thm:removedipoles} and \ref{thm:removedipoles1} hold, and furthermore we have the following:
\begin{enumerate}[(R1)]
\item $\overline E\cap\big(\overline V^\partial\times \overline V^\partial\big)=\emptyset$;
\item for each vertex $v\in \overline V^\partial$, either
\[
\overline \alpha (v,x)>0\qquad \forall\; x\sim v,
\]
or 
\[
\overline \alpha (v,x)<0\qquad \forall\; x\sim v;
\]
\item for all $v\in \overline V \setminus \overline V^{\partial}$ we have that $\mathsf{div}\,( \overline\alpha) (v)\in \{0,\pm1\}$ and, if $\mathsf{div}\,(\overline\alpha)(v)\neq 0$, then either
\[
\overline \alpha (v,x)\geq 0\qquad \forall\; x\sim v,
\]
or 
\[
\overline \alpha (v,x)\leq 0\qquad \forall\; x\sim v.
\]
\end{enumerate}
Furthermore, if $\overline\gamma:\overline E\to\mathbb R$ is a $1$-form satisfying the conclusions of Theorems \ref{thm:removedipoles} and  \ref{thm:removedipoles1} for the given $\overline \alpha$, then it explicitly induces a $1$-form $\gamma:E\to\mathbb R$ which satisfies the conclusion of the  same theorems for the initial form $\alpha$.
\end{lemma}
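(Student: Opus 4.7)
The plan is to enforce conditions (R1), (R2), (R3) successively by local graph surgeries, each preserving the hypotheses of Theorem \ref{thm:removedipoles} or Theorem \ref{thm:removedipoles1}, and then pull the conclusion back from the modified graph $\overline G$ to $G$.

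To secure (R1), I would subdivide every boundary--boundary edge $(v_1,v_2)\in E\cap(V^\partial\times V^\partial)$: introduce a new interior vertex $w$, replace the edge by the path $v_1\to w\to v_2$, and set $\overline\alpha(v_1,w)=\overline\alpha(w,v_2):=\alpha(v_1,v_2)$. This preserves $\mathsf{div}\,(\alpha)$ at $v_1,v_2$, yields $\mathsf{div}\,(\overline\alpha)(w)=0$, and leaves $\mathsf{Flux}$, $\|\cdot\|_\partial$ and connectivity unchanged.

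Next, to secure (R2), I would split every boundary vertex $v$ whose incident values $\alpha(v,\cdot)$ take both signs into two new boundary vertices $v^+$ and $v^-$: every edge with $\alpha(v,x)>0$ is redirected to $v^+$, every edge with $\alpha(v,x)<0$ to $v^-$, carrying the same $\overline\alpha$-value, while zero-valued incident edges can be removed (any resulting disconnection is harmless thanks to Lemma \ref{lem:connectedhyp}). Since $\mathsf{div}\,(\overline\alpha)(v^+)+\mathsf{div}\,(\overline\alpha)(v^-)=\mathsf{div}\,(\alpha)(v)$ and the boundary contributions at $v^\pm$ add up to the original one at $v$, both $\mathsf{Flux}$ and $\|\cdot\|_\partial$ are preserved.

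For (R3), take any interior vertex $v$ with $\mathsf{div}\,(\alpha)(v)=n\in\mathbb Z$ violating the requirement (i.e., $|n|\ge 2$, or $|n|=1$ with incident edges of mixed sign). I will replace $v$ by a small \emph{gadget}. Assume $n\ge 0$ (the case $n<0$ is symmetric). Enumerate the positive incident values $a_j=\alpha(v,x_j)$ and the absolute values of the negative ones $b_l=-\alpha(v,y_l)$, so that $p:=\sum_j a_j$, $q:=\sum_l b_l$ and $p-q=n$. Introduce new interior vertices $v_0,v_1,\dots,v_n$ together with one proxy $v_e$ per incident edge $e$ of $v$; connect each proxy $v_e$ to the original neighbour by an edge with $\overline\alpha(v_e,x)=\alpha(v,x)$. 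Choose a nonnegative decomposition $a_j=\sum_{i=0}^n a_j^{(i)}$ with $\sum_j a_j^{(0)}=q$ and $\sum_j a_j^{(i)}=1$ for $i\ge 1$ (a standard greedy bucket-filling, feasible because $q+n=p$), and add edges $\overline\alpha(v_i,v_{e_j^+})=a_j^{(i)}\ge 0$; each negative proxy $v_{e_l^-}$ is connected only to $v_0$ with $\overline\alpha(v_{e_l^-},v_0)=b_l$. A direct check gives $\mathsf{div}\,(\overline\alpha)(v_i)=1$ with nonnegative incident values for $i\ge 1$, $\mathsf{div}\,(\overline\alpha)(v_0)=0$, and $\mathsf{div}\,(\overline\alpha)(v_e)=0$ on every proxy; $\mathsf{Flux}$ and $\|\cdot\|_\partial$ are again unchanged.

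Finally, given a dipole-cleared $\overline\gamma$ on $\overline G$, define $\gamma(v,x):=\overline\gamma(v_e,x)$ on every original edge $e=(v,x)$, and $\gamma=\overline\gamma$ elsewhere. Using $\mathsf{div}\,(\overline\gamma)(v_e)=0$ at the proxy (an interior vertex of $\overline G$, hence covered by the thesis of Theorem \ref{thm:removedipoles}/\ref{thm:removedipoles1}) one obtains $\overline\gamma(v_e,x)=\sum_i\overline\gamma(v_i,v_e)$, whence
\[
|\gamma(v,x)|\le\sum_i|\overline\gamma(v_i,v_e)|\le\sum_i a_j^{(i)}=|\alpha(v,x)|,
\]
and $\mathsf{div}\,(\gamma)(v)=\sum_{i=0}^n\mathsf{div}\,(\overline\gamma)(v_i)$, so the dipole-cleared structure on $\overline G$ collapses back to $G$. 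The main subtlety will be the Step~3 gadget: checking that the nonnegative decomposition $a_j^{(i)}$ always exists (elementary since $q+n=p$), and that, for Theorem~\ref{thm:removedipoles1}, the unique exceptional vertex $\overline x_0\in\overline V\setminus\overline V^\partial$ either lies inside some gadget, in which case the corresponding original vertex $v\in V$ inherits the unit divergence and one sets $x_0:=v$, or lies in $V$ already and is carried over directly, so that the single-singularity statement is preserved.
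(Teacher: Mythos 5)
Your proposal is correct, and while its skeleton (three successive local surgeries enforcing \textit{(R1)--(R3)}, each preserving $\mathsf{Flux}$, $\|\cdot\|_\partial$ and integrality of the divergence, followed by an explicit pullback of $\overline\gamma$ to $\gamma$) matches the paper's, two of your three constructions are genuinely different. For \textit{(R1)} the paper simply deletes the edges in $V^\partial\times V^\partial$ and sets $\gamma:=\alpha$ on them afterwards, invoking Remark \ref{r:specifica ipotesi} to see that in the case $\mathsf{Flux}(\alpha)=\pm1$ this does not change $\|\cdot\|_\partial$; your subdivision by a midpoint vertex keeps $\|\cdot\|_\partial$ identically unchanged in all cases without that remark, at the cost of extra zero-divergence interior vertices. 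Your \textit{(R2)} is essentially the paper's construction. For \textit{(R3)} the paper splits $v$ into $k+1$ copies and lets every copy keep a fractionally weighted, same-sign edge to every neighbour (an explicit proportional formula, with pullback $\gamma(v,x)=\sum_j\overline\gamma(v^{(j)},x)$); you instead insert one proxy vertex per incident edge and distribute the positive mass over the unit-charge copies by a transportation-type allocation ($\sum_j a_j^{(i)}=1$ for $i\ge1$, $\sum_j a_j^{(0)}=q$, feasible since $p=q+n$), routing the negative mass through $v_0$. Your gadget avoids the paper's parallel edges and makes $|\gamma(v,x)|\le|\alpha(v,x)|$ immediate, since the original edge survives as the proxy-to-neighbour edge with unchanged capacity. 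Two small points to tidy up: the identity $\mathsf{div}\,(\gamma)(v)=\sum_{i=0}^n\mathsf{div}\,(\overline\gamma)(v_i)$ should really be the divergence theorem over \emph{all} gadget vertices, proxies included (harmless, since proxies have zero divergence unless the exceptional vertex $x_0$ of Theorem \ref{thm:removedipoles1} happens to be a proxy, a case your ``$x_0$ lies inside some gadget'' clause already covers, and note that $|\gamma(v,x)|\le|\alpha(v,x)|$ follows from $|\overline\gamma(v_e,x)|\le|\overline\alpha(v_e,x)|$ alone, without using zero divergence at the proxy); and on removed zero-valued boundary-incident edges one should set $\gamma:=\alpha=0$ in the pullback. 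The possible disconnection you flag at the \textit{(R2)} split occurs in the paper's construction as well and is indeed absorbed by Lemma \ref{lem:connectedhyp}.
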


\begin{proof}
We consider separately the reduction to the hypotheses \textit{(R1), (R2), (R3)}, describing in each case the way to recover the form $\gamma$ from $\overline{\gamma}$.

\medskip

\noindent \textit{(R1)}. It suffices to take $\overline V = V$, $\overline E := E \setminus (V^\partial\times V^\partial)$, and set $\overline{\alpha}:=\alpha\vert_{\overline E}$. Clearly, the graph $(\overline V, \overline E)$ satisfies \textit{(R1)}.
Moreover, $\mathsf{Flux}(\overline \alpha)=\mathsf{Flux}(\alpha)$ and under the hypotheses of Theorem \ref{thm:removedipoles} we have that $\|\overline \alpha\|_\partial \leq \|\alpha\|_\partial\leq 1$, while under the hypotheses of Theorem \ref{thm:removedipoles1} we have that $\|\overline \alpha\|_\partial =\|\alpha\|_\partial= 1$ (cp. Remark \ref{r:specifica ipotesi})

Finally, if $\overline\gamma$ satisfies the conclusions of the Theorems \ref{thm:removedipoles} and \ref{thm:removedipoles1} in $(\overline V, \overline E)$ for $\overline \alpha$, then the $1$-form
\[
\gamma(e):=
\begin{cases}
\overline \gamma(e) & \textup{if }\; e\in \overline{E},\\
\alpha (e) & \textup{if }\; e\in V^\partial\times V^\partial,
\end{cases}
\]
is such that $|\gamma|\leq|\alpha|$, $\gamma$ agrees with $\alpha$ on the boundary vertices $V^\partial$ and still satisfies the conclusion of the theorems.

\medskip 

\noindent \textit{(R2)}.  Without loss of generality we can assume \textit{(R1)} and we consider two copies of $V^\partial$, i.e., two sets $W^\partial_+, W^\partial_-$ with bijective maps $\pi_\pm : V^\partial \to W^\partial_\pm$. For simplifying the notation we denote $v_\pm := \pi_{\pm}(v)$ for every $v\in V^{\partial}$.
The set of vertices of the new graph $(\overline V, \overline E)$ is given by $\overline V:= \overline V^{int}\cup\overline V^\partial$ with
\[
\overline V^{int} := V^{int},\quad \overline V^\partial := \overline V^\partial_+\cup \overline V^\partial_-
\]
where $\overline V^\partial_\pm \subset W^\partial_\pm$ are given by the following condition:
\[
\overline V^\partial_\pm:=\Big\{v_\pm \in W^\partial_\pm : \exists\ x\in V,\ x\sim v\; \textup{such that }  \pm \alpha(v,x)>0\Big\}.
\]
The edges of $\overline E$ are then of three kinds: $(a,b)\in \overline E$ if and only if
\begin{itemize}
    \item $a, b\in \overline V^{int}=V^{int}$ and $(a, b)\in E$;
    \item $a=v_+\in \overline V^\partial_+$, $b\in \overline V^{int}$ and $(v,b)\in E$ with $\alpha(v,b)>0$, or symmetrically $b=v_+\in \overline V^\partial_+$, $a\in \overline V^{int}$ and $(v,a)\in E$ with $\alpha(v,a)>0$; 
    \item $a=v_-\in \overline V^\partial_-$, $b\in \overline V^{int}$ and $(v,b)\in E$ with $\alpha(v,b)<0$, or symmetrically $b=v_-\in \overline V^\partial_-$, $a\in \overline V^{int}$ and $(v,a)\in E$ with $\alpha(v,a)<0$.
\end{itemize}
In other words, $(\overline V,\overline E)$ is made by the interior points of $V$ with the corresponding edges, whereas the boundary points $\overline V^\partial$ correspond to the boundary points $v\in V^\partial$ labeled $\pm$ (eventually both $v_+$ and $v_-$) depending on the existence of edges $(v,x)\in E$ with $\mathsf{sgn}(\alpha(v,x))=\pm 1$. 

There is a natural projection map $\pi:\overline V\to V$ given by
\[
\pi(x):=
\begin{cases}
x & \textup{if } \ x\in \overline V^{int},\\
\pi_\pm^{-1}(x) & \textup{if } \ x\in \overline V^\partial_\pm.
\end{cases}
\]
and there is a 1:1 correspondence between edges, $\Pi:\overline E\to E$, given by
\[
\overline E \ni (a,b) \;\stackrel{\Pi}{\longmapsto}\; (\pi(a), \pi(b)) \in E.
\]
We define $\overline\alpha:\overline E\to\R$ as
\[
\overline\alpha(a,b):=\alpha\Big(\Pi(a,b)\Big).
\]
We then have that for every $v\in \overline V^{int}=V^{int}$
\begin{align*}
\mathsf{div}\,( \overline\alpha) (v) &= \sum_{(v,x)\in \overline E} \overline\alpha(v, x) 
= \sum_{(v,x)\in \overline E} \alpha\Big(\Pi(v,x)\Big)=\sum_{(v,x)\in \overline E} \alpha\Big(\pi(v),\pi(x)\Big)\\
&=\sum_{y\sim \pi(v),} \alpha(\pi(v), y)=
\mathsf{div}\, (\alpha) (\pi(v)) = \mathsf{div}\,(\alpha) (v).
\end{align*}
Therefore, the divergence of $\overline\alpha$ is unchanged in $\overline V^{int}=V^{int}$, and thus $\overline \alpha$ has integral divergence. 

By definition $\overline\alpha(v_+,x)>0$ for every $v_+\in \overline V^\partial_+$ and $x\sim v_+$ in $(\overline V, \overline E)$, and $\overline\alpha(v_-,x)<0$ for every $v_-\in \overline V^\partial_-$ and $x\sim v_-$. Moreover, the total flux and the total boundary variation of $\overline\alpha$ is unchanged with respect to those of $\alpha$.

Finally, if $\overline \gamma$ is a $1$-form satisfying the conclusion of the Theorems \ref{thm:removedipoles} and \ref{thm:removedipoles1} with respect to $\overline\alpha$, then the form
\[
\gamma(a,b) := \overline \gamma \Big(\Pi^{-1}(a, b)\Big) \qquad \forall\; (a,b)\in E,
\]
satisfies the same conclusions with respect to $\alpha$, because $|\overline \gamma| \leq |\overline \alpha|$ if and only if $|\gamma|\leq |\alpha|$, and $\mathsf{div}\, (\gamma) (v)=\mathsf{div}\,(\overline\gamma) (v)$ for all $v\in \overline V^{int}=V^{int}$.

\medskip 

\noindent \textit{(R3)}. The idea is similar to the previous case: we define the new graph $(\overline V, \overline E)$ by replacing each vertex $v\in V^{int}$ with $\mathsf{div}\,(\alpha)(v)=k$ and $|k|>1$, then we lift the graph by splitting the vertex $v$ into $k+1$ copies $v^{(0)},\dots,v^{(k)}$, and for every edge $(v,x)\in E$ we consider $k+1$ different edges $(v^{(j)},x)\in \overline E$.
We then define $\overline\alpha(v^{(j)},x)$ in such a way that
\[
\mathsf{div}\, (\overline\alpha) \big(v^{(j)}\big) =1 \quad j=1, \ldots, k,\qquad \mathsf{div}\, (\overline\alpha) \big(v^{(0)}\big) = 0,
\]
and
\[
\overline \alpha (v^{(j)},x) \, \alpha (v, x)\geq 0 \quad \textup{and}\quad
\sum_{j=0}^{k}\overline\alpha (v^{(j)},x) = \alpha (v, x) \quad\forall\;x\sim v.
\]
There exist several such liftings $\overline{\alpha}$ of $\alpha$. For example, if $\mathsf{div}\,(\alpha)(v)=k>0$, then we can split the vertices $x\sim v$ in two subsets:
\[
A_+:=\Big\{x\sim v : \alpha (v, x) \geq 0\Big\} \quad \textup{and}\quad A_-:=\Big\{x\sim v : \alpha (v, x) < 0\Big\},
\]
and we set $a_+:=\sum_{x\in A_+}\alpha (v,x)$, and
$a_-:=-\sum_{x\in A_-}\alpha (v,x)$ (hence, $a_+,a_-\geq 0$). Then, clearly $a_+-a_-=k$.
One we can then define $\overline\alpha$ in the following way:
\begin{gather*}
\overline\alpha (v^{(0)},x) :=
\begin{cases}
\alpha(v,x) & \textup{if } x \in A_-,\\
\frac{a_-}{k+a_-} \, \alpha(v,x)& \textup{if } x \in A_+,
\end{cases}\\
\overline\alpha (v^{(j)},x) :=
\begin{cases}
0 & \textup{if } x \in A_-,\\
\frac{\alpha(v,x)}{k+a_-} & \textup{if } x \in A_+,
\end{cases}
\end{gather*}
It is now straightforward to verify that with these definition $\overline \alpha$ satisfies the desired conclusions.

Finally, if $\overline\gamma$ is constructed in the lifted graph such to satisfy the conclusion of the theorems, then the projection on the original graph
\[
\gamma(v, x) := \sum_{j=0}^k\overline \gamma(v^{(j)},x)
\]
will satisfy the theses of the corresponding theorems too.\end{proof}
\subsection{Max-flow Min-cut}
In this section we introduce the notion of flow on graphs and recall the well-known \textit{Max-flow Min-cut Theorem} which will be used in the proof of the main results.

\begin{dfn}
Let $(V, E)$ be a connected bidirectional graph.
\begin{itemize}
\item A \textbf{simple path} is a sequence of edges
\begin{equation}\label{eq:path}
e_0 = (v_0,v_1), \; e_1 = (v_1,v_2), \ldots,\  e_m = (v_m,v_{m+1}) \in E,\;m\in \N,
\end{equation}
with $v_i\neq v_j$ for all $i\neq j$. We denote the path by $P=(e_0, \ldots, e_m)$, $e_j$ and $v_j$ are the edges and the vertices of $P$, respectively (we write $e_j\in P$ and $v_j\in P$, with a little abuse of notation), $v_0$ the starting vertex and $v_{m+1}$ the ending vertex.

\item A \textbf{flow} is a finite set of of couples
\[
\Phi:=\bigcup_{i=1}^N (P_i,m_i),
\]
with $P_i$ paths and $m_i\in (0, +\infty)$ the corresponding multiplicity. Given two disjoint subsets of vertices $V_1,V_2\subset V$, we say that a \textbf{flow $\Phi$ goes from $V_1$ to $V_2$}, and we write $\Phi:V_1\curvearrowright V_2$, if the starting vertices of $P_i$ belong to $V_1$ and the ending vertices belong $V_2$.
The \textbf{total flux} of a flow $\Phi=\bigcup_{i=1}^N (P_i,m_i)$ is the value
\[
T(\Phi):=\sum_{i=1}^N m_i.
\]
If $P$ starts in $V_1$ and ends in $V_2$ we also use the notation $P:V_1 \curvearrowright V_2$.

\item The \textbf{flow $1$-form} $\gamma_{\Phi}$ associated to a flow $\Phi=\bigcup_{i=1}^N (P_i,m_i)$ is defined by
\[
\gamma_{\Phi}(a, b) := \sum_{i:(a,b)\in P_i} m_i - \sum_{i:(b,a)\in P_i} m_i,
\]
with the convention that when the sum runs over the empty set the result is $0$.
\end{itemize}
\end{dfn}

\begin{remark}\label{rem-div-zero-flow}
The flow $1$-form $\gamma_{\Phi}$ associated to a flow $\Phi$ from $V_1$ to $V_2$ has zero divergence at all vertices in $V\setminus (V_1\cup V_2)$. Indeed, if ${\Phi}=\bigcup_{i=1}^N (P_i,m_i)$ then whenever $v\in P_i\setminus (V_1\cup V_2)$, then for each one of the indices $1\leq j_1<\cdots<j_{k_i}\leq m$ such that $v=v_{j_k}, 1\leq k \leq k_i$ in notation \eqref{eq:path}, we can associate to $v$ the two neighboring nodes; therefore we have
\begin{align*}
\mathsf{div}\,(\gamma_{\Phi}) (v) & = \sum_{v'\sim v}\gamma_{\Phi}(v,v') = \sum_{i=1}^N\sum_{(v,v')\in P_i}\gamma_{\Phi}(v,v')= \sum_{i=1}^N \sum_{k=1}^{k_i}\big(\gamma_{\Phi}(v,v_{j_k-1})+ \gamma_{\Phi}(v,v_{j_k+1})\big)\\
&=\sum_{i=1}^N\sum_{k=1}^{k_i}\big(-\gamma_{\Phi}(v_{j_k-1},v)+ \gamma_{\Phi}(v,v_{j_k+1}) = \sum_{i=1}^N k_i\big(-m_i+m_i\big)=0.
\end{align*}

\end{remark}

We introduce next the notion of capacity.

\begin{dfn}
Let $(V, E)$ be a connected bidirectional graph.
\begin{itemize}
\item A \textbf{capacity} $c:E\to [0,+\infty)$ is a functions such that 
\[
c(a,b) = c(b,a) \qquad \forall\;(a,b)\in E.
\]

\item We say that a flow $\Phi$ is \textbf{dominated} by a capacity function $c$, and we write $\gamma_{\Phi}\preccurlyeq c$, if
\[
\sum_{i : (a,b)\in P_i} m_i + \sum_{i : (b,a)\in P_i} m_i\leq c(a,b)\quad\forall\; (a,b)\in E.
\]
\item An edge $e=(a,b)\in E$ is \textbf{saturated} in a dominated flow if the equality holds
\[
\sum_{i:(a,b)\in P_i} m_i + \sum_{i:(b,a)\in P_i} m_i= c(a,b).
\]
\end{itemize}

\end{dfn}

Finally, we introduce the notion of cut.

\begin{dfn}
Let $(V, E)$ be a connected bidirectional graph and $V_1, V_2\subset V$ two disjoint subsets of vertices.
\begin{itemize}
\item A \textbf{cut} between $V_1$ and $V_2$ is a bidirectional subset of edges $C\subset E$ such that every path $P$ from $V_1$ to $V_2$ have to intersect $C$, i.e., $(a,b)\in C \Leftrightarrow (b,a) \in C$
and
\[
P\cap C\neq \emptyset \qquad \forall\; P:V_1 \curvearrowright V_2.
\]

    \item Given a capacity function $c:E\to [0,+\infty)$, the \textbf{capacity of a cut $C$} is the value
\[
c(C) := \frac12\sum_{e\in C} c(e).
\]
\end{itemize}

\end{dfn}

The following theorem is a version of a well-known result (see e.g. \cite{ford1956maximal}).

\begin{thm}[Max-flow Min-cut]\label{thm:mfmc}
Let $(V, E)$ be a connected bidirectional graph, let $V_1$, $V_2\subset V$ be two disjoint subsets of vertices and $c:E\to [0,+\infty)$ a capacity function.
Then, the maximal total flux over all flows from $V_1$ to $V_2$ dominated by the capacity $c$ coincides with the minimum capacity among the  cuts between $V_1$ and $V_2$:
\[
\max \Big\{ T(\Phi) : \Phi:V_1 \curvearrowright V_2,\;\gamma_{\Phi}\preccurlyeq c\Big\}
=
\min \Big\{ c(C) : C\subset V\;\textup{cut between $V_1$ and $V_2$}\Big\}.
\]
Moreover, if $C^\star$ is a minimum cut, then every edge $e\in C^\star$ is saturated by every maximal flow $\Phi^\star$ and, if $\Phi^{\star\star}$ is any other maximal flow with minimal cut $C^\star$, then for every $e\in C^\star$ there holds
\[
e\in \Phi^\star \Leftrightarrow -e\notin \Phi^\star\Leftrightarrow e \in \Phi^{\star\star}.
\]
\end{thm}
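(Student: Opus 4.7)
The plan is to deduce this from the classical max-flow min-cut theorem of Ford and Fulkerson by identifying each pair $\{(a,b),(b,a)\}\subset E$ with a single undirected edge of capacity $c(a,b)$, contracting $V_1$ to a source $s$ and $V_2$ to a sink $t$, and interpreting path-based flows $\Phi$ as classical $s$--$t$ flows. Under this reduction the notion of ``saturated edge'' matches an undirected edge whose total two-way usage equals its capacity, and our definition of cut matches the standard edge cut separating $s$ from $t$.

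For the easy inequality $\max T(\Phi)\leq \min c(C)$, fix a cut $C$, let $A\subseteq V$ be the set of vertices reachable from $V_1$ in $(V,E\setminus C)$ (so $V_1\subseteq A$, $V_2\subseteq V\setminus A$, and every undirected edge in $C$ joins $A$ to $V\setminus A$), and let $\Phi=\bigcup_i(P_i,m_i)$ be a dominated flow. Each $P_i$ must traverse $C$ at least once, so the total two-way traversal of $C$ by $\Phi$ is at least $T(\Phi)$; on the other hand, dominance bounds this two-way traversal by $\sum_{e\in C}c(e)/2=c(C)$ (each undirected pair counted once), giving $T(\Phi)\leq c(C)$. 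The reverse inequality is proved by the Ford--Fulkerson augmenting-path argument in the residual network, which exactly leverages the bidirectional structure of $E$: if the current flow is not maximal there is an augmenting $V_1$--$V_2$ path along which $T(\Phi)$ can be strictly increased, and at termination the set $A^\star$ of vertices reachable from $V_1$ in the residual graph produces a cut $C^\star$ whose capacity equals $T(\Phi^\star)$. The underlying fact that any $1$-form of prescribed (integral or real) divergence dominated by $c$ admits a path decomposition is standard, and can be obtained by iteratively extracting paths and cycles until the residual $1$-form vanishes.

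Finally, the structural statement about minimum cuts follows by tracing equality in the weak-duality chain. If $T(\Phi^\star)=c(C^\star)$, then each path $P_i$ of $\Phi^\star$ must traverse $C^\star$ exactly once and only in the outgoing direction from $A^\star$ to $V\setminus A^\star$: any additional traversal would create extra two-way usage on some cut edge and force a strict inequality. This has two consequences. First, every $e\in C^\star$ must be saturated, since otherwise the bound $T(\Phi^\star)\leq c(C^\star)$ would be strict. Second, the direction in which each undirected cut edge is used depends only on the partition $(A^\star, V\setminus A^\star)$ and not on the particular maximum flow chosen; in particular, for the edge $e\in C^\star$ oriented from $A^\star$ to $V\setminus A^\star$, only $e$ (never $-e$) appears in any maximum flow with minimum cut $C^\star$. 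The equivalences $e\in\Phi^\star\Leftrightarrow -e\notin\Phi^\star\Leftrightarrow e\in\Phi^{\star\star}$ then follow immediately. The main technical subtlety throughout is the precise translation between the path-decomposition viewpoint used here and the classical $1$-form viewpoint, but once this is handled the argument reduces to standard max-flow min-cut combinatorics.
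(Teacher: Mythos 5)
The paper does not actually prove Theorem \ref{thm:mfmc}: it is stated as ``a version of a well-known result'' with a citation to Ford--Fulkerson, so there is no internal proof to compare yours against. Your sketch is essentially the standard argument that the citation stands for (weak duality by counting traversals of a cut, residual/augmenting-path reasoning for the equality, flow decomposition to pass between edge-based flows and the paper's path-based flows), plus a proof of the ``moreover'' clause, which is the only part not literally contained in the classical statement. As a sketch it is correct; three points deserve tightening. First, your parenthetical claim that every edge of a cut $C$ joins $A$ to $V\setminus A$ is false for general cuts (a cut may contain edges lying entirely on one side of the reachability partition); this is harmless because your bound only uses that every $V_1$--$V_2$ path meets $C$. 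Second, since capacities are real-valued you should not argue via ``termination'' of the augmenting-path algorithm: existence of a maximal dominated flow follows instead from compactness (there are finitely many simple paths, the multiplicity vectors dominated by $c$ form a compact set, and $T$ is linear), and then the absence of an augmenting path for a maximal flow yields, via residual reachability, a cut of capacity $T(\Phi^\star)$.

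Third, in the structural part your equality-tracing does give that each path of a maximal flow crosses $C^\star$ exactly once and that every edge of $C^\star$ is saturated, but the assertion that the unique crossing is ``only in the outgoing direction'' is not justified by the extra-usage remark and needs its own short argument: the prefix of the path before its unique $C^\star$-edge avoids $C^\star$, hence lies in $A^\star$ (the set reachable from $V_1$ avoiding $C^\star$), so the tail of the crossing edge is in $A^\star$; if its head were also in $A^\star$, concatenating a $C^\star$-avoiding path from $V_1$ to that head with the $C^\star$-avoiding suffix of $P_i$ would produce a $V_1$--$V_2$ path missing $C^\star$, contradicting that $C^\star$ is a cut. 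With this inserted, every max flow uses each positive-capacity edge of $C^\star$ only in the direction from $A^\star$ to $V\setminus A^\star$ and saturates it, which gives the stated equivalences and their independence of the particular maximal flow. (For cut edges of zero capacity neither orientation appears in any flow, so the displayed equivalence $e\in\Phi^\star\Leftrightarrow -e\notin\Phi^\star$ degenerates; that is a defect of the statement as written, not of your argument, and it is irrelevant for the way the theorem is used in Remark \ref{rem-optimalflow2}.)
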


\begin{remark}
    \label{rem-optimal flow}
    As a consequence of the above result, for $V^\partial:= V_1\cup V_2$ Remark \ref{rem-div-zero-flow} shows that $\mathsf{Flux}(\gamma_{\Phi})=0$ for any flow form, and hence using the divergence theorem \eqref{eq-divergence-thm} for a maximal flow $\Phi^\star$ from $V^1$ to $V^2$ we get
    $$
   \sum_{v\in V^2} \mathsf{div}\,(\gamma_{\Phi^\star})(v)= -\sum_{v\in V^1} \mathsf{div}\,(\gamma_{\Phi^\star})(v)= c(C^\star)= T(\Phi^\star)=\sum_i m_i.
    $$
\end{remark}
\begin{remark}[orientation induced on a cut by a saturating flow]
    \label{rem-optimalflow2}
    With the same notation as in the previous remark, for $i=1,2$ we denote by $V^\star_i$ the set of vertices $x$ for which there exists a path $P: v\curvearrowright x$ with $v\in V_i, P\subset E\setminus C^\star$. In other words, in the graph $G=(V, E\setminus \{C^\star\})$ the set $V_i^\star$ is the union of connected components of vertices in $V_i$. We also set 
    \[
        C^\star_i:=\{\text{vertices of edges } e\in C^\star\}\cap V^\star_i.
    \]
    Then we observe that $e\in C^\star$ satisfies $e\in\Phi^\star$ if and only if it is directed from $C^\star_1$ to $C^\star_2$. The set of such directed edges will be denoted $\overrightarrow C^\star_{12}$. Note that, due to the last part of Theorem \ref{thm:mfmc}, this orientation is the same for any choice of the optimum flow $\Psi^\star$, in case this optimum flow is not unique.
\end{remark}

\subsection{Proofs of Theorem \ref{thm:removedipoles} and Theorem \ref{thm:removedipoles1}}\label{sec:thmremovedipoles}
%
%
\begin{proof}[Proof of Theorem \ref{thm:removedipoles}:]
By Lemma \ref{lem:connectedhyp} and Lemma \ref{lem:auxhyp} we can assume without loss of generality that $(V,E)$ is connected and that the assumptions $(R1)$ and $(R2)$ hold for $(V,E)$ and $\alpha$. 
We partition $V^\partial=V^{\partial}_{+}\cup V^{\partial}_{-}$ with 
\begin{equation}
    \label{eq-segno-bordo}
V^{\partial}_{\pm}:=\Big\{v\in V^\partial:\ \pm\alpha(v,x)>0\ \forall x\in V\Big\}.
\end{equation}
As $\mathsf{Flux}(\alpha)=0$, we find the following relations which we will use later:
    \begin{equation}\label{valalpha1}
    \xi_0:=\sum_{v\in V^\partial_+}\sum_{x\sim v}\alpha(v,x)
    =-\sum_{v\in V^\partial_-}\sum_{x\sim v}\alpha(v,x)=\frac12\|\alpha\|_\partial\leq \frac12.
\end{equation}
We apply Theorem \ref{thm:mfmc} with $V_1=V^{\partial}_{+},V_2=V^{\partial}_{-}$, and capacity function $c(a,b):=|\alpha(a,b)|$. Consider any maximal flow $\Phi^\star$, with $\gamma^\star=\gamma_{\Phi^\star}$ the associated flow $1$-form, and a minimum cut $C^\star$ and note that
\begin{equation}\label{e.flussi gamma* e alpha}
0\le \xi^\star:=c(C^\star)=\frac12\sum_{e\in C^\star}|\alpha(e)|=\sum_{v\in V^\partial_+}\sum_{x\sim v}\gamma^\star(v,x)\leq 
\sum_{v\in V^\partial_+}\sum_{x\sim v}\alpha(v,x)=\xi_0,
\end{equation}
where in the last inequality we use the constraint $|\gamma^\star|\le c=|\alpha|$, and the definition of $V^\partial_+$.

\medskip

With the notation as in Remark \ref{rem-optimalflow2}, consider the graph $G':=(V', E')$ in which $V'=V^\star_1\cup C^\star_2$ and $E'\subset E$ is induced by vertex set $V'$: 
\[
E':=\left\{e\in E:\ e \text{ has both endpoints in }V'\right\}.
\]
We define 
\[
V^{\prime\, \partial}:=V^{\partial}_+\cup C^\star_2.
\]
It is clear from the construction that each $v\in(V')^{int}= V'\setminus V^{\prime\, \partial} $ has the same set of neighbors in $G'$ as it did in $G$: indeed given $x$ such that $x\sim v$, or equivalently $(v,x)\in E$, if $v\notin C^\star$, we infer that $x$ is the end point of a path from $V^\partial_+$ not intersecting $C^\star$ (it is enough to reach $v$ with such a path and to prolong it with the edge $(v,x)$), in other words $(v,x)\in E'$. Otherwise if $v\in C^\star$ and $(v,x)\in E$, arguing by contradiction we conclude that $x\in V'$.
Thus, $\mathsf{div}\,(\alpha|_{G'})(v)=\mathsf{div}\,(\alpha)(v)\in\mathbb Z$ for all $v\in (V')^{int}$.
From Remark \ref{rem-optimalflow2}, edges $e=(v'',v')\in \overrightarrow C^\star_{12}$ point toward the exterior of $G'$ and satisfy $c(e)=\gamma^\star(e)$. Then, applying the divergence theorem to $\alpha|_{G'}$, we find 
\begin{eqnarray*}
\mathbb Z&\ni& -\sum_{v\in (V')^{int}}\mathsf{div}\,( \alpha)(v)=\sum_{v\in V^\partial_+}\sum_{x\sim v}\alpha(v,x)-
\sum_{e\in \overrightarrow C^\star_{12}}\alpha(e) \\
&=&\xi_0-\sum_{e\in \overrightarrow C^\star_{12}}\alpha(e).
\end{eqnarray*}
Since by Remarks \ref{rem-optimal flow} and \ref{rem-optimalflow2} we have
\[
\sum_{e\in \overrightarrow C^\star_{12}}|\alpha(e)|=\frac12\sum_{e\in C^\star}|\alpha(e)|=\xi^\star\leq \xi_0\le 1/2,
\]
there are only two possibilities: either
\[
\xi_0-\sum_{e\in \overrightarrow C^\star_{12}}\alpha(e) = 0,
\]
and therefore 
\[
\left\vert \sum_{e\in \overrightarrow C^\star_{12}}\alpha(e) \right\vert =\xi^\star= \xi_0\le 1/2;
\]
or
\[
\left\vert\xi_0-\sum_{e\in \overrightarrow C^\star_{12}}\alpha(e)\right\vert = 1,
\]
in which case
\[
\sum_{e\in \overrightarrow C^\star_{12}}\alpha(e)=-\frac12, \quad \textup{and}\quad  \frac12\sum_{e\in C^\star}|\alpha(e)|=\xi^\star= \xi_0 = 1/2.
\]
In either case we get $\xi^\star=\xi_0$. Recalling \eqref{e.flussi gamma* e alpha}, this in turn implies that $\gamma^\star=\alpha$ over $V\times V^{\partial}_{+}$, and thus also $\gamma^\star=\alpha$ over $V\times V^{\partial}_{-}$, in view of the chain of inequalities 
\begin{align*}
\sum_{v\in V^\partial_+}\sum_{x\sim v}\alpha(v,x) & = \sum_{v\in V^\partial_+}\sum_{x\sim v}\gamma^\star(v,x)=
-\sum_{v\in V^\partial_-}\sum_{x\sim v}\gamma^\star(v,x)\\
&\leq \sum_{v\in V^\partial_-}\sum_{x\sim v}|\alpha(v,x)|=-\sum_{v\in V^\partial_-}\sum_{x\sim v}\alpha(v,x)\\
&=\sum_{v\in V^\partial_+}\sum_{x\sim v}\alpha(v,x),
\end{align*}
where we used the flow properties of $\gamma^\star$, and in the last step we used the hypothesis $\mathsf{Flux}(\alpha)=0$. Thus the flow $\gamma^\star$ satisfies all the claimed properties, concluding the proof.

Finally, we prove the last claim of the theorem. Assume that $\|\alpha\|_\partial<1$, let $\Phi^\star=\sum_{i=1}^N (P_i,m_i)$ be an optimal flow as above, and assume that $|\alpha|=|\gamma_{\Phi^\star}|$ over all edges. Note that up to removing loops from the paths $P_i$, we may assume that paths $P_i, 1\le i\le N$ do not self-intersect. If $\alpha$ has a $\mathsf{div}(\alpha)(v)\neq 0$ at $v\in V\setminus V^\partial$, then since the divergence of $\alpha$ takes integer values we have $\sum_{x\sim v}|\alpha(v,x)|\geq 1$. On the other hand, paths $P_i$ all pass at most once through $v$. Let $I$ be the set of indices $1\le i\le N$ such that $P_i$ passes through $v$. Then for each $i\in I$ there exist exactly two oriented edges, denoted $(x_i,v),(v,y_i)\in E$, that belong to $P_i$, and that appear as consecutive edges in $P_i$. Then we have
\begin{eqnarray*}
    \sum_{x:x\sim v}|\gamma_{\Phi^\star}(v,x)|&=&\sum_{x:x\sim v} \left|\sum_{i\in I: x=x_i} m_i  - \sum_{j\in I: x=y_j}m_j\right|\leq \sum_{x:x\sim v} \left(\sum_{i\in I: x=x_i} m_i  +\sum_{j\in I: x=y_j}m_j\right)\\ 
    &=& 2\sum_{i\in I} m_i\leq 2 T(\Phi^\star)=2\xi^\star = 2\xi^0=\|\alpha\|_\partial<1 \leq \sum_{x:x\sim v}|\alpha(v,x)|.
\end{eqnarray*}
Thus there exists at least one $x\sim v$ such that $|\gamma_{\Phi^\star}(v,x)|<|\alpha(v,x)|$, as desired.

\end{proof}
\begin{proof}[Proof of Theorem \ref{thm:removedipoles1}:]
We assume that the graph $G$ is connected and that the auxiliary hypotheses \textit{(R1)-(R3)} from Lemma \ref{lem:auxhyp} for $\alpha$ are satisfied.
In particular, if we set
\[
V^\pm_\alpha := \Big\{ v\in V^{int} : \mathsf{div}\,(\alpha) (v) = \pm 1 \Big\},
\]
then by \textit{(R3)} we have
\begin{equation}\label{eq:valunderhyp}
\mathsf{Flux}(\alpha)=\# V^-_\alpha - \# V^+_\alpha.
\end{equation}
Without loss of generality we can assume that $\mathsf{Flux}\ (\alpha)=-1$, as for the other case it suffices to apply the same proof to $-\alpha$.
As stressed in Remark \ref{r:specifica ipotesi}, under the assumptions of the theorem we have that $V^\partial_+=\emptyset$ (where $V^\partial_\pm$ are defined in \eqref{eq-segno-bordo}). Moreover by \eqref{eq:valunderhyp} we have that $V_\alpha^+\neq\emptyset$ and we prove the result proceeding by induction over the cardinality of this set. 

\medskip

If $\# V_\alpha^+=1$ by the fact that $\mathsf{Flux} (\alpha)=-1$ it follows using \eqref{eq:valunderhyp} that $\#V_\alpha^-=0$, and we may take $\gamma=\alpha$, completing the proof. 

\medskip 

For the inductive step, we start by fixing an arbitrary point $x_0\in V^+_\alpha$ and we apply Theorem \ref{thm:mfmc} with capacity function $c=|\alpha|$ and $V_1:=V^{\partial}_{-}=V^\partial, V_2=\{x_0\}$.
Let $\Phi^\star,C^\star$ be a maximal flow and a minimal cut, and denote by $\xi^\star$ the optimum value $\xi^\star=c(C^\star)=T(\Phi^\star)$.
Since the flow $1$-form satisfies $\gamma_{\Phi^\star}$ we have $|\gamma_{\Phi^\star}|\leq |\alpha|$, we obtain from Remark \ref{rem-optimal flow} that
\begin{align*}
0\le \xi^\star &= -\sum_{v\in V^\partial_-}\sum_{x\sim v}\gamma_{\Phi^\star}(v,x)\leq \sum_{v\in V^\partial_-}\sum_{x\sim v}|\alpha(v,x)|= - \sum_{v\in V^\partial_-}\sum_{x\sim v}\alpha(v,x)=\mathsf{Flux}(\alpha)=1.
\end{align*}
Then, if we have that $\xi^\star=1$, then it follows that
\[
-\sum_{v\in V^\partial_-}\sum_{x\sim v}\gamma_{\Phi^\star}(v,x)=- \sum_{v\in V^\partial_-}\sum_{x\sim v}\alpha(v,x)=\|\alpha\|_\partial=1, 
\]
and therefore $\gamma_{\Phi^\star}$ is a $1$-form satisfying the conclusion of the theorem, because
\begin{itemize}
\item $|\gamma_{\Phi^\star}|\leq |\alpha|$,
\item $\mathsf{div}\,( \gamma_{\Phi^\star})(v)=0$ for all $v\in V^{int}\setminus \{x_0\}$ thanks to Remark \ref{rem-div-zero-flow} and $\mathsf{div}\,( \gamma_{\Phi^\star})(x_0)=1$ as a consequence of Remark \ref{rem-optimal flow},
\item $\gamma_{\Phi^\star}(v,x)= \alpha(v,x)$ for all $v\in V^\partial_-$ and all $x\sim v$.
\end{itemize}
   
\medskip

We consider the remaining case, namely $0\le \xi^\star<1$. We argue as in the proof of Theorem \ref{thm:removedipoles}. We set $G':=(V', E')$ to be the subgraph of $G$ which roughly speaking is induced by the vertices in $C^\star$ and the vertices that are not separated from $x_0$ upon removing the cut $C^\star$. More precisely, we take $V':=V^\star_2\cup C^\star_1$ with notations as in Remark \ref{rem-optimalflow2} for the choices $V_1=V_-^\partial=V^\partial, V_2=\{x_0\}$, and where $E'$ are the edges of $E$ with both endpoints in $V'$. Then for $G'$ consider the boundary vertices
\[
V^{\prime\, \partial}:=C^\star_1.
\]

It is easy to see that $\alpha|_{G'}$ has integer divergence over $V'\setminus V^{\prime\, \partial}$ and moreover
\[
    \|\alpha|_{G'}\|_\partial = c(C^\star)= \xi^\star<1.
\]
In particular, it follows that $|\mathsf{Flux}(\alpha\vert_{G'})| <1$, and since by the divergence theorem we have that 
\[
\mathsf{Flux}(\alpha\vert_{G'}) = -\sum_{v\in V'} \mathsf{div}\,( \alpha)(v) \in \Z,
\]
we deduce that $\mathsf{Flux}(\alpha\vert_{G'})=0$.
We thus may apply Theorem \ref{thm:removedipoles} to $\alpha|_{G'}$ and find a $1$-form $\gamma':E'\to\mathbb R$ such that $|\gamma'|\leq|\alpha|$ and $\gamma'=\alpha$ over $C^\star$.
   
Now we define a $1$-form $\alpha':E\to\mathbb R$ as follows:
\[
\alpha'(e)=\left\{\begin{array}{ll} 
\gamma'(e)&\text{ if }e\in E',\\
\alpha(e)&\text{ if }e\in E\setminus E'.
\end{array}\right.
\]
We have that 
\begin{itemize}
\item $\alpha'=\alpha$ over all $(v,x)$ such that $v\in V^\partial$ and $x\sim v$, thus $-\mathsf{Flux}(\alpha')=\|\alpha'\|_\partial=1$;
\item $\mathsf{div}\,( \alpha')(v')=\mathsf{div}\, ( \gamma')(v')=0$ for $v'\in V^\star_2\setminus\{x_0\}$ and $\mathsf{div} \,(\alpha')(v)=\mathsf{div} \,(\alpha)(v)\in\mathbb Z$ over $V^{\prime\, \partial}$ (here we use that $\gamma'=\alpha$ over $V^{\prime\, \partial}$);
\item furthermore,
\begin{gather*}
V^+_{\alpha'}\cap (V^{int}\setminus (V')^{int})= V^+_\alpha\cap (V^{int}\setminus (V')^{int}),\\
\# V^+_{\alpha'}\cap (V')^{int}=0<\# V^+_\alpha\cap (V')^{int},    
\end{gather*}
as now $\alpha'$ has no positive charge at $x_0$ anymore.
\end{itemize}
By our inductive hypothesis, we can thus find $|\gamma''|\leq \alpha'$ and $y\in V^+_{\alpha'}$ such that $\gamma''$ has zero divergence away from $y$ and $\gamma''=\alpha'$ over $V^\partial$. As $|\alpha'|\leq |\alpha|$ and $\alpha'=\alpha$ over $V^\partial$, we find that $\gamma''$ satisfies the desired properties of the theorem, concluding the proof.
\end{proof}
%
%
%
%
%
\section{Proof of Theorem \ref{thm:planargraphSD}}\label{sec:dualityend}
In this section we give the last ingredient needed to deduce Theorem \ref{thm:planargraphSD}: the next proposition guarantees that in the process of removing dipoles provided in Theorem \ref{thm:removedipoles} and Theorem \ref{thm:removedipoles1} we can keep the boundary condition unchanged.
In all this section  $(V,E,F)$ and $u$ (and the related notations) are the ones of Theorem \ref{thm:planargraphSD}. 
\begin{proposition}\label{prop:bdryvalue}
    Let $(V,E,F)$ be an admissible planar complex with boundary complex $(V^\partial, E^\partial)$, $u:V\to \mathbb R$ be a real-valued function, and $\widetilde\alpha:E\to[-1/2,1/2]$ be a $1$-form with values in $[-1/2,1/2]$, satisfying the following:
    \begin{enumerate}
        \item There exists $e_0\in E^\partial$ such that for all $e\in E^\partial\setminus\{e_0\}$ there holds $\pi(du(e)) = du(e)$.
        \item There exists $f_0\in F$ such that for all $f\in F\setminus\{f_0\}$ there holds $\mathsf{curl}\,(\widetilde\alpha)(f)=0$.
        \item For all $e\in E^\partial$ there holds $\pi(du(e))=\widetilde \alpha(e)$.
    \end{enumerate}
    Then there exists a function $\widetilde u:V\to\mathbb R$ such that
    \begin{equation}\label{eq:utilde}
        \widetilde u(v)= u(v)\quad \forall v\in V^\partial\quad\textup{and}\quad  \pi(d\widetilde u(e))= \widetilde \alpha(e)\quad \forall e\in E.
    \end{equation}
\end{proposition}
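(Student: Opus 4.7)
The plan is to construct $\widetilde u$ by integrating $\widetilde\alpha$ along a spanning tree of $(V,E)$ that contains the entire boundary cycle $E^\partial$ except the edge $e_0$. The first key preliminary observation is that $\mathsf{curl}(\widetilde\alpha)(f_0)\in\mathbb Z$. Indeed, telescoping the face curls we have
$$\sum_{f\in F}\mathsf{curl}(\widetilde\alpha)(f)=\sum_{e\in E^\partial}\widetilde\alpha(e),$$
and the right-hand side equals $\sum_{e\in E^\partial}\pi(du(e))$ by condition~(3). Since $\pi(y)-y\in\mathbb Z$ for every $y$ and $\sum_{e\in E^\partial}du(e)=0$ (because $du$ is exact around the closed boundary), using condition~(1) this gives
$$\sum_{e\in E^\partial}\pi(du(e))=\sum_{e\in E^\partial}du(e)+\bigl(\pi(du(e_0))-du(e_0)\bigr)\in\mathbb Z.$$
Combined with condition~(2), it follows that $\mathsf{curl}(\widetilde\alpha)$ is integer-valued on every face of $F$.

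Since $E^\partial\setminus\{e_0\}$ is a path in $(V,E)$ (by admissibility and connectedness), it extends to a spanning tree $T$ of $(V,E)$ rooted at a vertex $v_\ast\in V^\partial$. Set $\widetilde u(v_\ast):=u(v_\ast)$ and for every $v\in V$ define
$$\widetilde u(v):=u(v_\ast)+\sum_{e\in P_v}\widetilde\alpha(e),$$
where $P_v$ is the unique oriented path in $T$ from $v_\ast$ to $v$. The boundary identity $\widetilde u=u$ on $V^\partial$ then follows immediately: for $v\in V^\partial$ the path $P_v$ lies inside $E^\partial\setminus\{e_0\}$, so using conditions~(3) and~(1) in succession,
$$\widetilde u(v)-u(v_\ast)=\sum_{e\in P_v}\widetilde\alpha(e)=\sum_{e\in P_v}\pi(du(e))=\sum_{e\in P_v}du(e)=u(v)-u(v_\ast).$$
For a tree edge $e\in T$ we have $d\widetilde u(e)=\widetilde\alpha(e)\in[-1/2,1/2]$, and the tie-breaking rule in the definition of $\pi$ gives $\pi(d\widetilde u(e))=\widetilde\alpha(e)$. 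For a non-tree edge $e=(a,b)\in E\setminus T$, the oriented cycle $C:=P_a\cdot e\cdot P_b^{-1}$ is a $\mathbb Z$-linear combination of interior face boundaries by planarity, so the integer-curl property of $\widetilde\alpha$ gives $d\widetilde u(e)-\widetilde\alpha(e)=-\int_C\widetilde\alpha\in\mathbb Z$, hence $\pi(d\widetilde u(e))=\widetilde\alpha(e)$ as long as $|\widetilde\alpha(e)|<1/2$.

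The main remaining obstacle is the sign-convention issue at non-tree edges where $|\widetilde\alpha(e)|=1/2$: in this case the congruence $d\widetilde u(e)\equiv\widetilde\alpha(e)\pmod{\mathbb Z}$ alone is not sufficient, because the tie-breaking rule in the definition of $\pi$ further requires the integer shift $d\widetilde u(e)-\widetilde\alpha(e)$ to be non-negative when $\widetilde\alpha(e)=+1/2$ and non-positive when $\widetilde\alpha(e)=-1/2$. I expect to handle this by adding integers to $\widetilde u$ at suitably chosen interior vertices (which does not alter $\widetilde u|_{V^\partial}$ nor any of the identities already established on tree edges), exploiting the antisymmetry $\widetilde\alpha(-e)=-\widetilde\alpha(e)$ to ensure consistency across orientations.
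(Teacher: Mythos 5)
Your preliminary observation that $\mathsf{curl}(\widetilde\alpha)(f_0)\in\mathbb Z$ (telescoping the face curls against $\widetilde\alpha(E^\partial)=\sum_{e\in E^\partial}\pi(du(e))$ and using (1), (3)) is correct and is indeed needed, implicitly, in the paper as well. The boundary-matching and the tree-edge identities are also fine, modulo one inaccuracy: for an admissible planar complex $E^\partial\setminus\{e_0\}$ need not be a simple path (the boundary walk can pass twice through a pinch vertex, e.g.\ two squares glued at a corner, in which case $E^\partial\setminus\{e_0\}$ contains a cycle and cannot sit inside any spanning tree); this is repairable by putting into $T$ only a spanning tree of the connected subgraph $E^\partial\setminus\{e_0\}$, since tree paths between boundary vertices then stay in that subtree.

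The genuine gap is the one you flag yourself and leave unresolved. On a non-tree edge $e$ your construction only yields $d\widetilde u(e)-\widetilde\alpha(e)\in\mathbb Z$, and when $|\widetilde\alpha(e)|=\tfrac12$ this is strictly weaker than $\pi(d\widetilde u(e))=\widetilde\alpha(e)$: by the tie-breaking convention, $\pi\bigl(\tfrac12+k\bigr)=-\tfrac12$ for every integer $k\le-1$, so a shift of the wrong sign flips the value. Your proposed remedy, "adding integers to $\widetilde u$ at suitably chosen interior vertices", is not a local operation: changing $\widetilde u(v)$ by $n_v\in\mathbb Z$ alters $d\widetilde u$ on \emph{all} edges incident to $v$, so you are really asking for an integer potential $n$, vanishing on $V^\partial$, satisfying one-sided difference constraints on every edge with $|\widetilde\alpha|=\pm\tfrac12$; existence of such a potential is a nontrivial feasibility problem (absence of inconsistent constraint cycles) for which you give no argument, and antisymmetry of $\widetilde\alpha$ alone does not provide one. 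The paper's proof is organized precisely so as to avoid relying on the congruence class: when $\mathsf{curl}(\widetilde\alpha)\equiv0$ it integrates $\widetilde\alpha$ along a closed path through all vertices using the discrete Green identity \eqref{eq:curlzerocircuitzero}, obtaining $d\widetilde u=\widetilde\alpha$ \emph{exactly} on every edge (hence no tie-break issue at all); when $\mathsf{curl}(\widetilde\alpha)(f_0)\neq0$ it first cuts the complex along a shortest path from the endpoint of $e_0$ to $\partial f_0$, duplicates that path so the form becomes curl-free on the cut complex, and reduces to the previous case, so that an integer discrepancy (the single number $\mathsf{curl}(\widetilde\alpha)(f_0)$) appears only on the few edges crossing the cut. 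To complete your argument you must either prove the integer-adjustment claim or replace the generic spanning-tree integration by such a cut construction, which confines and controls where the mod-$\mathbb Z$ ambiguity can occur.
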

%
%

%
\begin{proof} 
We first consider the case that $\mathsf{curl}\,(\widetilde \alpha)=0$, while the general case reduces to this curl-free case by a suitable cut in the planar complex $(V, E,F)$. The useful fact to keep in mind is the "discrete Green's theorem" below (recall notation \eqref{eq:alphaofedges}):
\begin{equation}\label{eq:curlzerocircuitzero}
    \text{If $\mathsf{curl}\,(\widetilde \alpha)=0$ over $F$, then $\tilde\alpha(C)=0$ for any closed path $C$ in the graph.}
\end{equation}
To prove this, we may reduce to the case of $C$ being a simple closed path, i.e., a path without repeated vertices, as in general one can write $C$ as a union of simple closed paths. Furthermore, we can restrict to $C$ having at least $3$ vertices, as paths of the form $(v,v'),(v',v)$ automatically satisfy the property because $\widetilde \alpha$ is alternating. In this case, we assume $C$ to be oriented counterclockwise and, as $(V,E,F)$ is admissible, it follows that the interior of the simple closed curve induced by $C$ is a union of faces $f_1,\dots,f_k\in F$. Note that $\sum_{i=1}^k\mathsf{curl}\,(\widetilde\alpha)(f_i)=0$, and on the other hand, note that for any oriented edge $e\in E$ contained in the interior of $C$, there exist two faces $f_i,f_j$ such that $e\in\partial f_i, -e\in\partial f_j$ and $e,-e\notin \partial f_l$ for all $1\le \ell\le k, \ell\neq i,j$. Every edge in $C$ belongs to exactly one face $f_i$ each, and the (counterclockwise) orientations of $C$ and of $\partial f_i$ coincide on such edges. Thus, in summary,
\[
    0=\sum_{i=1}^k \mathsf{curl}\,(\widetilde \alpha)(f_i)=\sum_{i=1}^k\sum_{e\in\partial f_i}\widetilde\alpha(e)=\tilde\alpha(C), 
\]
as desired.

\textbf{Step 1: The case of $\mathsf{curl}\,(\widetilde \alpha)=0$ on all faces, including $f_0$.} 
We will use \eqref{eq:curlzerocircuitzero} in order to define $\widetilde u$ by extending it along a path. We fix a closed path $P_0=(v_0,v_1,\dots, v_n,v_0)$ which passes at least once through each $v\in V$, and such that $v_0\in V^\partial$. Then define $\widetilde u(v_0):=u(v_0)$ and iteratively for each $0\le i< n$ set 
\begin{equation}\label{eq:defineutilde}
    \widetilde u(v_{i+1}):=\widetilde u(v_i) + \widetilde \alpha(v_i,v_{i+1}) = \widetilde u(v_{j_0}) +\sum_{k=j_0}^i \widetilde \alpha(v_k, v_{k+1}), \quad \forall\ 0\le j_0 \le i,
\end{equation}
in which the second equality can be proved inductively and is included for future reference. Note that $\widetilde u$ is then well-defined on all vertices $v\in V$: indeed, if a vertex $v_i$ is visited more than once, i.e., there is $j>i$ such that $v_j=v_i$, then $(v_i,v_{i+1}, \dots v_{j-1},v_j)$ is a closed path, and we get
\[
    \widetilde u(v_j)\stackrel{\text{\eqref{eq:defineutilde}
    }}{=}\widetilde u(v_i)+\sum_{k=i}^{j-1} \widetilde \alpha(v_k,v_{k+1})\stackrel{\text{\eqref{eq:curlzerocircuitzero}}}{=}\widetilde u(v_i).
\]
Now note that thanks to \eqref{eq:curlzerocircuitzero} and \eqref{eq:defineutilde}, $d\widetilde u(e)=\widetilde \alpha(e)$ on all $e\in E$. We now verify the theses of the proposition. First $(V^\partial, E^\partial\setminus\{e_0\})$ is connected, $du= d\widetilde u$ on the edges $E^\partial\setminus\{e_0\}$, and we have $\widetilde u(v_0)=u(v_0)$, thus $u=\widetilde u$ over $V^\partial$. The fact that $\widetilde\alpha$ takes values in $[-1/2,1/2]$ allows to obtain $\widetilde\alpha=\pi\circ \widetilde\alpha=\pi\circ d\widetilde u$ on all edges, completing the proof. For future reference, note that, in this case, we also have $\widetilde \alpha(e_0)=du(e_0)= \pi(du(e_0))$.

\textbf{Step 2: The case of $\mathsf{curl}\,(\widetilde\alpha)(f_0)\neq 0$.} In this case denote $e_0=(v_0, v_1)$, which we assume oriented in counterclockwise direction along the boundary of the complex. We consider the shortest path $P$ in the graph $(V, E)$ from $v_0$ to $\partial f_0$, and build a new domain with boundary given by the realization of the counterclockwise path in the boundary complex going from $v_1$ to $v_0$, followed by $P$, $\partial f_0$, and the inverse path of $P$. 

To this domain we associate the complex $(V', E', F')$, in which we duplicate edges and vertices from $P$, including the endpoint vertex $w_0$ of $P$ belonging to $\partial f_0$. We denote by $\overline P$ the inverse path of $P$ formed by duplicated vertices and edges. In particular, the new boundary complex is 
\[
    (V^{\prime\, \partial},E^{\prime\, \partial})=(V^\partial\cup(V^P\cup V^{\overline P}\cup V^{\partial f_0}), E^{\partial} \cup (E^P\cup E^{\overline P}\cup E^{\partial f_0})),
\]
where $V^P$, $E^P$ are the vertices and edges of $P$ (and similarly for $\overline P$), $V^{f_0}$ is obtained from the set of vertices of $f_0$ after adding a copy $\overline w_0$ of $w_0$, and $E^{\partial f_0}$ is the set of edges of $\partial f_0$, denoted $(\overline w_0, w_n),(w_n,w_{n-1}),\dots,(w_1,w_0)$, oriented so that, before duplication, edges $(w_i,w_{i+1})$ were in counterclockwise order along $\partial f_0$. We then extend $\widetilde\alpha=\pi\circ\widetilde\alpha$ on duplicated edges by taking the same value as on the original ones, in particular the property $\mathsf{curl}\,(\widetilde\alpha)=0$ holds on the new complex. We then extend $u$ to $\hat u$ such that $\hat u( v_0)=u(v_0)$ and $\hat u=u$ on $V^\partial\cap V^{\prime\, \partial}$, and extend $\hat u$ along the path $P\cup \partial f_0\cup \overline P$, so that $d\hat u=\widetilde \alpha$ along the corresponding edges. Then $\pi\circ d\hat u=d\hat u$ along these edges, due to the property that $\widetilde \alpha=\pi\circ\widetilde \alpha$ as $\widetilde \alpha$ takes values in $[-1/2, 1/2]$. Note that after extension, if $\overline v$ is the duplicate of a vertex $v\in P$, then 
\begin{equation}\label{eq:uduplicated}
    \hat u(\overline v)-\hat u(v)=\mathsf{curl}\,(\widetilde\alpha)(f_0)\in\mathbb Z,\quad\text{ therefore }\quad\pi\big(\hat u(\overline v)-\hat u(v)\big)=0.
\end{equation}
In particular, by applying this for $v=v_0$ we find $\pi(d\hat u(\overline v_0, v_1))=\pi(du(v_0,v_1))$, and if in the original graph $v_{-1}$ was the neighbor of $v_0$ in clockwise direction along the boundary, then $d\hat u(v_{-1}, \overline v_0)=du(v_{-1},v_0)$.

We now apply the case of Step 1 with function $\hat u$ and $1$-form $\widetilde \alpha$ on the new complex, and obtain $\widetilde u: V'\to\mathbb R$ such that $\widetilde u=\hat u$ and $d\widetilde u=\widetilde \alpha=d\hat u$ on the boundary. We can now define (by a slight abuse of notation) $\widetilde u:V\to\mathbb R$ on the original complex by disregarding the values on duplicated vertices. Then the first equality in \eqref{eq:utilde} follows by definition since $\widetilde u=\hat u$ over non-duplicated vertices $V^\partial\setminus\{v_0\}$. Finally, we check the second equality from \eqref{eq:utilde}. Fix $v\in P$ and $w\in V\setminus P$. If $(v,w)\in E'$ then \eqref{eq:utilde} follows directly. If $(\overline v, w)\in E'$, then $\pi(d\widetilde u(v,w))=\pi(d\hat u(\overline v,w))=\widetilde\alpha(v,w)$. 
\end{proof}

\subsection{Conclusion of the proof of Theorem \ref{thm:planargraphSD}}\label{conclusions}
With the previous ingredients, we are now in the position to prove our main theorem.

\begin{proof}[Proof of Theorem \ref{thm:planargraphSD}:]
Starting with $u:V\to\mathbb R$ as in the theorem, we first apply Lemma \ref{lem:dual} to the $1$-form $du$, so that hypotheses (h1), (h2) translate respectively to \eqref{eq:hyp1dual} and \eqref{eq:hyp2dual}
for $(du)^\perp$. In these two cases, we apply respectively Theorem \ref{thm:removedipoles} and  Theorem \ref{thm:removedipoles1} to the form $\alpha=\pi\circ(du)^\perp$, obtaining a new form $\gamma$. Note that by the definition of $\pi$, the $1$-form $\pi\circ(du)^\perp$ takes values in $[-1/2, 1/2]$ and thus so does $\gamma$. We then apply Proposition \ref{prop:bdryvalue} with $\widetilde\alpha:=\gamma$, where the hypotheses of that proposition hold due to hypothesis (h0), and to the properties of $\gamma$ given in the thesis of either Theorem \ref{thm:removedipoles} or Theorem \ref{thm:removedipoles1}. The function $\widetilde u$ obtained from Proposition \ref{prop:bdryvalue} then verifies the properties required in Theorem \ref{thm:planargraphSD}, as desired. 
\end{proof}
%
%

\textbf{Acknowledgements:} A.G. acknowledges the financial support of PRIN 2022J4FYNJ “Variational methods for stationary and evolution problems with singularities and interfaces”, PNRR Italia Domani, funded by the European Union via the program NextGenerationEU, CUP B53D23009320006. Views and opinions expressed are however those of the authors only and do not necessarily reflect those of the European Union or The European Research Executive Agency. Neither the European Union nor the granting authority can be held responsible for them.

M.P. was supported by the Centro Nacional de Inteligencia Artificial and by Chilean Fondecyt grant 1210426 entitled "Rigidity, stability and uniformity for large point configurations". 

E.S. was supported by ERC-STG grant HiCoS "Higher
Co-dimension Singularities: Minimal Surfaces and the Thin Obstacle Problem”, Grant n. 759229.

\appendix
\section{Results for higher boundary total variation}\label{sec:openproblems}

As shown in Remark \ref{r:optimal}, the hypothesis on the total variation of the boundary datum in Theorems \ref{thm:removedipoles} and \ref{thm:removedipoles1} cannot be relaxed without changing the conclusions.
On the other hand one can still obtain some weaker results using similar arguments. The following theorem is given as an example.

\begin{thm}\label{thm:removedipoles2}
Consider a connected bidirectional graph $G=(V, E)$ with boundary vertices $V^\partial\subset V$.
Let $\alpha:E\to\mathbb R$ be a $1$-form with integral divergence and assume furthermore that
\[
\mathsf{Flux}(\alpha)\in \{\pm1\},\quad \|\alpha\|_{\partial}\in (1, 2].
\]
Then there exist a $1$-form $\gamma$ and a vertex $x_0\in V\setminus V^\partial$ such that
\begin{itemize}
\item $|\gamma|\leq 3|\alpha|$ and $\gamma=\alpha$ over edges from $V^\partial\times V$,
\item $\mathsf{div}\, (\gamma)(x_0)=-\mathsf{Flux}(\alpha)$ and $\mathsf{div}\,(\gamma)(v)=0$ for all $v\in V\setminus (V^\partial\cup \{x_0\})$.
\end{itemize}
\end{thm}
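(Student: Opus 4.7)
The strategy is to decompose the problem into pieces on which the earlier results apply, and to spend the extra factor of $3$ in order to combine one auxiliary flow with the output of Theorem \ref{thm:removedipoles1}.

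\textbf{Step 1 (Reductions).} The proof of Lemma \ref{lem:auxhyp} does not use the precise bound $\|\alpha\|_\partial\leq 1$ and carries over to our hypotheses. After applying it we may assume the graph $(V,E)$ is connected, $\mathsf{div}\,(\alpha)(v)\in\{0,\pm 1\}$ for all $v\in V^{int}$ with consistent sign of $\alpha$ at each $v$ with non-zero divergence, and $V^\partial=V^\partial_+\sqcup V^\partial_-$ with a single sign of $\alpha$ on each boundary vertex (notation as in the proof of Theorem~\ref{thm:removedipoles}). Without loss of generality assume $\mathsf{Flux}(\alpha)=-1$; setting $A_\pm:=\sum_{v\in V^\partial_\pm}|\mathsf{div}\,(\alpha)(v)|$ we have $A_-=A_++1$ and $A_++A_-=\|\alpha\|_\partial\leq 2$, so $A_+\leq 1/2$ and $A_-\leq 3/2$.

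\textbf{Step 2 (Component analysis).} Let $C_1,\dots,C_m$ be the connected components of the subgraph of $(V,E)$ induced by $V^{int}$ and the edges between interior vertices. For each $j$ set
\[
q_j:=\sum_{v\in C_j}\mathsf{div}\,(\alpha)(v)=F(C_j\to V^\partial),
\]
which is an integer lying in $[-A_+(C_j),A_-(C_j)]\subset(-1,2)$, hence $q_j\in\{0,1\}$. From $\sum_j q_j=-\mathsf{Flux}(\alpha)=1$ we deduce the existence of a unique $j^\star$ with $q_{j^\star}=1$, and we choose $x_0\in V^+_\alpha\cap C_{j^\star}$, which is non-empty because $\#(V^+_\alpha\cap C_{j^\star})-\#(V^-_\alpha\cap C_{j^\star})=q_{j^\star}=1$.

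\textbf{Step 3 (Zero-charge components).} For $j\neq j^\star$, let $G_j$ be the subgraph of $G$ induced by $C_j$ together with the boundary vertices adjacent to it. On $G_j$ we have $\mathsf{Flux}(\alpha|_{G_j})=-q_j=0$ and, since $A_+(C_j)=A_-(C_j)$, $\|\alpha|_{G_j}\|_\partial=2A_+(C_j)\leq 2A_+\leq 1$, so Theorem \ref{thm:removedipoles} applies and yields $\gamma_j$ on $G_j$ with $|\gamma_j|\leq|\alpha|$, $\gamma_j=\alpha$ on boundary edges and zero divergence at interior vertices.

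\textbf{Step 4 (The component carrying the charge).} On $G_{j^\star}$ the form $\alpha|_{G_{j^\star}}$ satisfies $\mathsf{Flux}=-1$ and $\|\cdot\|_\partial=2A_+(C_{j^\star})+1\in[1,2]$. If $A_+(C_{j^\star})=0$ then Theorem \ref{thm:removedipoles1} applies directly and gives $\gamma_{j^\star}$ with $|\gamma_{j^\star}|\leq|\alpha|$. Otherwise, apply Theorem \ref{thm:mfmc} to produce a maximal flow $\Phi$ from $V^\partial_+(C_{j^\star})$ to $V^\partial_-(C_{j^\star})\cup V^-_\alpha(C_{j^\star})$ of capacity $|\alpha|$; since the flow decomposition of $\alpha$ itself realises $A_+(C_{j^\star})$ units of such flow, $T(\Phi)=A_+(C_{j^\star})$ and $\Phi$ saturates every boundary edge from $V^\partial_+$. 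Put $\alpha':=\alpha|_{G_{j^\star}}-\gamma_\Phi$. Then $\alpha'$ vanishes on the $V^\partial_+$ boundary edges, has $|\alpha'|\leq 2|\alpha|$ on interior edges, reduced boundary variation on $V^\partial_-$, and still satisfies the hypotheses of Theorem \ref{thm:removedipoles1} (after the natural step of removing the $V^\partial_+$ boundary, now disconnected by the saturation) thanks to the balance $\mathsf{Flux}(\alpha')=-1$, $\|\alpha'\|_\partial=1$; here one uses the integer-divergence reductions of Step 1 to route the flow $\Phi$ so that $\alpha'$ again has integer divergence. Applying Theorem \ref{thm:removedipoles1} to $\alpha'$ produces $\gamma'$ with $|\gamma'|\leq|\alpha'|\leq 2|\alpha|$, divergence concentrated at $x_0$ and $\gamma'=\alpha'$ on the (reduced) boundary. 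Extending $\gamma'$ by zero on the $V^\partial_+$ edges and setting
\[
\gamma_{j^\star}:=\gamma'+\gamma_\Phi
\]
yields a form with $|\gamma_{j^\star}|\leq|\gamma'|+|\gamma_\Phi|\leq 3|\alpha|$, with $\gamma_{j^\star}=\alpha$ on every boundary edge (since $\gamma_\Phi$ saturates $V^\partial_+$ edges and on $V^\partial_-$ edges $\gamma'+\gamma_\Phi=\alpha'+\gamma_\Phi=\alpha$), and with $\mathsf{div}\,(\gamma_{j^\star})=\delta_{x_0}$ on $C_{j^\star}$ because $\gamma_\Phi$ has zero divergence at all interior vertices that $\Phi$ merely traverses, while the interior sinks fed by $\Phi$ are exactly those whose divergence is reabsorbed by the construction of $\alpha'$.

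\textbf{Step 5 (Global assembly).} The subgraphs $\{G_j\}_j$ share only boundary-to-interior edges, and on every such edge all pieces agree with $\alpha$; their interiors are disjoint. Glueing the $\gamma_j$'s and $\gamma_{j^\star}$ therefore produces a well-defined $\gamma$ on $(V,E)$ with $|\gamma|\leq 3|\alpha|$, $\gamma=\alpha$ over $V^\partial\times V$, and $\mathsf{div}\,(\gamma)=-\mathsf{Flux}(\alpha)\,\delta_{x_0}$ on $V\setminus V^\partial$.

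\textbf{Main obstacle.} The delicate point is Step 4, more precisely the claim that $\Phi$ can be chosen so that the residual $\alpha'$ again has integer divergence (so that Theorem \ref{thm:removedipoles1} applies). Because $A_+\leq 1/2$ is typically non-integer, one cannot simply route a unit of $V^\partial_+$ flow into a single $V^-_\alpha$ vertex; the argument relies on the rigidity of the integer-divergence structure after Step 1 to guarantee that all of the $V^\partial_+$ output can be pushed to $V^\partial_-$ (so that $b'=0$ in the decomposition), reducing to the $\|\alpha'\|_\partial=1$ setting in which Theorem \ref{thm:removedipoles1} is applicable. Making this ``integrality of the reroute'' explicit is the core of the technical work.
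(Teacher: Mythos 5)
Your overall architecture---strip off the excess boundary variation with an auxiliary max flow, reduce to the case $\|\cdot\|_\partial=1$, apply Theorem \ref{thm:removedipoles1}, then add the auxiliary flow back at the price of the factor $3$---is the right one, but there is a genuine gap exactly at the point you yourself flag as the ``main obstacle'', and it is not a removable technicality of your write-up: it is the crux of the theorem. You route the auxiliary flow $\Phi$ from $V^\partial_+$ to $V^\partial_-\cup V^-_\alpha$. Any flow mass terminating at an interior sink $x\in V^-_\alpha$ is at most $A_+\le \tfrac12<1$, so at such a vertex $\mathsf{div}\,(\alpha')(x)=m-1$ with $m\in(0,\tfrac12]$; hence $\alpha'=\alpha-\gamma_\Phi$ loses integral divergence and Theorem \ref{thm:removedipoles1} cannot be applied to it. The claim that $\Phi$ can be rerouted so that \emph{all} of the $V^\partial_+$ mass reaches $V^\partial_-$ is asserted (``rigidity of the integer-divergence structure'') but never proved, so the proposal does not yet constitute a proof.

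The paper closes precisely this hole, and the fix is simpler than your setup suggests: take the maximal flow $\Phi^\star$ between the two boundary parts only, $V_1=V^\partial_+$, $V_2=V^\partial_-$, with capacity $|\alpha|$. Then $\gamma_{\Phi^\star}$ is divergence-free at \emph{every} interior vertex, so $\widetilde\alpha=\alpha-\gamma_{\Phi^\star}$ inherits the integral divergence of $\alpha$ for free; the only thing left to prove is that the max-flow value $\xi^\star$ equals the full capacity of the smaller boundary side (in the paper's normalization $\mathsf{Flux}(\alpha)=1$, this is $\xi_-\le\tfrac12$). This is exactly the min-cut integrality argument of Theorem \ref{thm:removedipoles}: apply the divergence theorem to the subgraph on the source side of a minimal cut; the interior divergences there sum to an integer, which combined with $\xi^\star\le\xi_-\le\tfrac12$ forces $\xi^\star=\xi_-$, i.e.\ the smaller boundary side is saturated. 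After that, $\widetilde\alpha$ satisfies $\mathsf{Flux}(\widetilde\alpha)=\mathsf{Flux}(\alpha)$, $\|\widetilde\alpha\|_\partial=1$, Theorem \ref{thm:removedipoles1} applies, and $\gamma=\gamma_{\Phi^\star}+\widetilde\gamma$ satisfies $|\gamma|\le|\gamma_{\Phi^\star}|+|\widetilde\alpha|\le 3|\alpha|$. The same argument run inside your charged component $G_{j^\star}$, with target $V^\partial_-(C_{j^\star})$ instead of $V^\partial_-(C_{j^\star})\cup V^-_\alpha(C_{j^\star})$, would repair your Step 4; note also that the decomposition into interior components (your Steps 2, 3 and 5) then becomes unnecessary, since the global max flow plus Theorems \ref{thm:removedipoles} and \ref{thm:removedipoles1} handle the whole graph at once.
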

\begin{remark}\label{r:optimal2}
Consider the graph from the second example in Remark \ref{r:optimal}. Besides $\alpha$ itself, it is easy to check that the only forms satisfying the second conclusion of Theorem \ref{thm:removedipoles2} are $\gamma_1,\gamma_2$ given by
\[
\gamma_i(A_i,B)=\frac12+\epsilon,\quad \gamma_i=\alpha\text{ on all remaining edges}, \quad i=1,2.
\]
Note that have 
\[
\mathsf{Flux}(\alpha)=1,\quad \|\alpha\|_{\partial} = 1+ 4\eps, \quad
|\gamma_i(A_i,B)|=\frac{1+2\epsilon}{1-2\epsilon} |\alpha(A_i,B)|.
\]
For $\epsilon\leq 1/4$ we get $\|\alpha\|_{\partial}\leq 2$ and $\frac{1+2\epsilon}{1-2\epsilon}\leq 3$, which is coherent with Theorem \ref{thm:removedipoles2}, and the equality for $\epsilon=1/4$, thus showing the necessity of the hypothesis $\|\alpha\|_{\partial}\leq 2$.
\end{remark}
\medskip

\begin{proof}[Proof of Theorem \ref{thm:removedipoles2}:]
We assume that the auxiliary conditions \textit{(R1)-(R3)} from Section \ref{sec:auxhyp} hold for $\alpha$ and, without loss of generality, we assume $\mathsf{Flux}(\alpha)=1$.
We partition $V^\partial=V^{\partial}_{+}\cup V^{\partial}_{-}$ with 
\[
V^{\partial}_{\pm}:=\Big\{v\in V^\partial:\ \pm\alpha(v,x)>0\ \forall x\in V,\ x\sim v\Big\},
\]
and set 
\[
\xi_+ := \sum_{v\in V^\partial_+,\;x\sim v}\alpha(v,x), \qquad  \xi_- := -\sum_{v\in V^\partial_-,\;x\sim v}\alpha(v,x).
\]
Then, 
\[
\xi_+-\xi_- = \mathsf{Flux}(\alpha) = 1, \qquad \xi_++\xi_- = \|\alpha\|_{\partial} \leq 2.
\]
In particular, it follows that $\xi_-\leq\frac12$. We apply Theorem \ref{thm:mfmc} with $c=|\alpha|, V_1=V^{\partial}_+, V_2=V^{\partial}_-$, obtaining optimal flow and cut $\Phi^\star, C^\star$.
Set $\xi^\star:=c(C^\star)$. Clearly, since $C^\star$ separates $V^{\partial}_+$ from $V^{\partial}_-$ and $|\gamma_{\Phi^\star}|\le |\alpha|$, we have $\xi^\star\leq \xi_+\le 1/2$. Arguing exactly as in the proof of Theorem \ref{thm:removedipoles}, we infer  that $\xi^\star=\xi_-$. 
In particular, we get that $\gamma_{\Phi^\star}(e)=\alpha(e)$ over $e\in E\cap(V^{\partial}_-\times V)$, but we only know that $0\le \gamma_{\Phi^\star}(e)\le\alpha(e)$ over $e\in E\cap (V^\partial_+\times V)$, and in general the second inequality will be strict.

To conclude, set $\widetilde \alpha:=\alpha-\gamma_{\Phi^\star}$. The following properties can be checked directly:
\begin{itemize}
\item $\mathsf{div}\,(\widetilde \alpha)(v)=\mathsf{div}\,(\alpha)(v)\in\mathbb Z$ for $v\in V^{int}$,
\item $\mathsf{Flux}(\widetilde \alpha)=\mathsf{Flux}(\alpha)=1$ and $\|\widetilde\alpha\|_\partial=1$.
\end{itemize}
We then apply Theorem \ref{thm:removedipoles1} to $\widetilde\alpha$, and for the obtained $1$-form $\tilde \gamma$ we have $\widetilde\gamma(e)=\widetilde\alpha(e)$ for all $e\in E\cap( V^\partial\times V)$ and
\[
|\widetilde\gamma|\leq |\widetilde\alpha|\leq |\alpha| + |\gamma_{\Phi^\star}|\leq 2|\alpha|,
\]
and there exists $v_0\in V^{int}$ with  
\[
\mathsf{div}\,(\widetilde\gamma)(v_0) = -1,\quad \mathsf{div}\,(\widetilde\gamma)(v) = 0\; \forall v\in V^{int}\setminus \{x_0\}.
\]
Then the $1$-form $\gamma:=\gamma_{\Phi^\star}+\widetilde\gamma$ has divergence equal to the one of $\widetilde\gamma$, satisfies $\gamma=\gamma_{\Phi^\star}+\widetilde\alpha=\alpha$ over the edges with a vertex on the boundary $V^\partial$, and by triangular inequality
\[
|\gamma|\le |\gamma_{\Phi^\star}| + |\widetilde\alpha|\le 3|\alpha|,
\]
thus concluding the proof.
\end{proof}

For $M,K>0$ consider the following statement about a $1$-form $\alpha:E\to\mathbb R$ with integral divergence, defined over a connected bidirectional graph $G=(V,E)$:
\[
(P_{M,K}):\quad 
\left\{\begin{array}{ccc}\text{If $\mathsf{Flux}(\alpha)\ge 0$ and $\|\alpha\|_\partial\leq M$, then there exists a $1$-form $\gamma$ such that}\\[3mm]
\left.\begin{array}{lll}
\text{(1) $\gamma=\alpha$ on edges from $V^\partial\times V$,}\\
        \text{(2) $\gamma$ has integral divergence,}\\
        \text{(3) $V_\gamma^+=\emptyset,\quad V_\gamma^-\subseteq V_\alpha^-$,}\\
\text{(4) $|\gamma|\leq K|\alpha|$.}
        \end{array}\right.
        \end{array}\right.
\]

The above discussion naturally raises the following question: 
\begin{question}\label{q:bestupperbound}   
    Find the \textbf{best upper bound} function $K^\star:[0,+\infty)\to[0,+\infty]$ such that property $(P_{M,K^\star(M)})$ is true for all connected bidirectional graphs $G=(V,E)$ and for all $1$-forms $\alpha:E\to\mathbb R$ with integral divergence. \textbf{Is $K^\star(M)$ of polynomial growth as $M\to \infty$?}
\end{question}
It is trivial to see that we have \textbf{$K^\star(0)=0$} by simply taking $\gamma=0$ and that \textbf{$K^\star(\cdot)$ is increasing}. Our theorems \ref{thm:removedipoles} and \ref{thm:removedipoles1} show that $K^\star|_{(0,1]}\leq 1$, and the result of Theorem \ref{thm:removedipoles2} shows that $K^\star|_{(1,2]}\leq 3$. Furthermore, Remark \ref{r:optimal} shows the necessary conditions $K^\star|_{[1,+\infty)}\geq 1$, which given the monotonicity of $K^\star$ is equivalent to the information \textbf{$K^\star(1)=1$}. Remark \ref{r:optimal2} shows that $K^\star(M)\geq (M+1)/(3-M)$ for $M\in [1,3)$, in particular \textbf{$K^\star(2)=3$}.
\printbibliography
\end{document}